\newtheorem{theorem}{Theorem}
\newtheorem{corollary}[theorem]{Corollary}
\theoremstyle{definition}
\newtheorem{definition}[theorem]{Definition}
\newtheorem{remark}[theorem]{Remark}
\newtheorem{example}{Example}
\newtheorem{procedure}{Procedure}
\renewcommand{\tilde}[1]{\widetilde{#1}}
\title{From primary to dual affine variety codes over the Klein quartic}
\author{Olav Geil
}
\affil{Department of Mathematical Sciences\\ Aalborg University}
\begin{document}
\maketitle

\begin{abstract}
In~\cite{geilozbudak} a novel method was established to estimate the minimum distance of primary affine variety codes and a thorough treatment of the Klein quartic led to the discovery of a family of primary codes with good parameters, the duals of which were originally treated in~\cite{kolluru-feng-rao}[Ex.\ 3.2, Ex.\ 4.1]. In the present work we translate the method from~\cite{geilozbudak} into a method for also dealing with dual codes and we demonstrate that for the considered family of dual affine variety codes from the Klein quartic our method produces much more accurate information than what was found in~\cite{kolluru-feng-rao}. Combining then our knowledge on both primary and dual codes we determine asymmetric quantum codes with desirable parameters. \\

\noindent {\bf{Keywords:}} {\it{Affine variety code, Asymmetric quantum code, Feng-Rao bound, Gr\"{o}bner basis,  Klein quartic}}\\

\noindent {\bf{MSC}}: 94B65, 94B05, 81Q99
\end{abstract}

\section{Introduction}
\label{intro}
In~\cite{geilozbudak} the authors studied a family of primary affine variety codes over ${\mathbb{F}}_8$, defined from the Klein quartic. The length of these codes is $n=22$, and the dimensions are easy to establish, but to lower bound the minimum distances they introduced a new method where the footprint bound from Gr\"{o}bner basis theory is applied in a novel manner. The resulting codes have good parameters, either similar to the best known codes according to~\cite{grassl} or in a few cases with a defect in the minimum distance of only one. 

In the present paper we show, using simple arguments, how to translate the findings from~\cite{geilozbudak} into information on the corresponding dual affine variety codes. Besides giving us a new family of good classical linear codes this allows us to construct good asymmetric quantum codes, the handling of which requires detailed information on a set of nested classical linear codes $C_2 \subsetneq C_1$ as well as on the set of nested dual codes $C_1^\perp \subsetneq C_2^\perp$. The dual classical codes from the Klein quartic were originally treated in~\cite{kolluru-feng-rao}[Ex.\ 3.2, Ex.\ 4.1] and in~\cite{FR2} which are among the seminal papers on Feng-Rao theory. One way of viewing our method is to consider it as a way of exhuming what (in our understanding) is the most basic principle that makes the Feng-Rao bound work, and to employ this principle in a novel manner. Doing so, for the dual codes related to the Klein quartic we derive much sharper bounds on Hamming weights, and thereby minimum distances, than have previously been reported. In addition to the above we present a universal procedure for establishing primary descriptions of dual affine variety codes and vice versa. Our procedure being universal means that given a polynomial ideal over ${\mathbb{F}}_q$ and a corresponding monomial ordering it returns a primary description for all related dual affine variety codes and vice versa. Thereby it provides a relevant alternative to the newly presented method in~\cite{MR4272610}.

The paper is organized as follows. In Section~\ref{sec2} we recall the method from~\cite{geilozbudak} for handling primary affine variety codes and list results for the case of the Klein quartic that will be needed throughout the paper, including a small refinement which does not change the overall analysis of~\cite{geilozbudak}, but which shall prove important in connection with our treatment of dual codes. Furthermore, we enhance the analysis to also treat the relative distance between two nested codes, the information of which is of importance when constructing asymmetric quantum codes. Then in Section~\ref{sec2point5} we introduce fundamental, yet simple, results which allow us to employ the findings for primary codes to establish bounds on dual codes. This involves descriptions both at a linear code level as well as using the language of affine variety codes. From that  we then in Section~\ref{sec3} establish  extensive information on a family of dual codes from the Klein quartic, and we make the comparison with~\cite{kolluru-feng-rao}[Ex.\ 3.2, Ex.\  4.1], demonstrating the advantage of our method. We then in Section~\ref{sec5} establish primary descriptions of dual codes as well as dual descriptions of primary codes, meaning that for any code considered in~\cite{geilozbudak} as well as any code of the present paper we know both a generator matrix and a parity-check matrix. From this in Section~\ref{sec4.5} we are able to demonstrate tightness of the minimum distance estimates from Section~\ref{sec3} in a considerable amount of cases and to improve upon one of them. Finally, in Section~\ref{sec4} we construct asymmetric quantum codes through the use of the CSS construction, and we demonstrate that they have desirable parameters. This includes examples of impure codes.
\section{Affine variety codes and the results from~\cite{geilozbudak}}\label{sec2}
The concept of affine variety codes was originally coined by Fitzgerald and Lax in~\cite{lax}. Our exposition on the topic relies on the footprint of an ideal
\begin{definition}
Given a field ${\mathbb{F}}$, an ideal $J \subseteq {\mathbb{F}}[X_1, \ldots , X_m]$ and a monomial ordering $\prec$ on the set of monomials in the variables $X_1, \ldots , X_m$, the corresponding footprint is given by
\begin{eqnarray}
\Delta_\prec(J)=\{ M \mid M {\mbox{ is a monomial which is not the leading monomial {\hspace{2cm}}}}  \nonumber \\
{\mbox{ of any polynomial in }} J \}. \nonumber
\end{eqnarray}
\end{definition}
From \cite{clo}[Prop.\ 4, Sec.\ 5.3] we have:
\begin{theorem}\label{thefoot}
The set $\{M+J \mid M \in \Delta_\prec(J)\}$ is a basis for ${\mathbb{F}}[X_1, \ldots , X_m]/J$ as a vector space over ${\mathbb{F}}$.
\end{theorem}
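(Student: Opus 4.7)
The plan is to split the statement into spanning and linear independence, and to invoke standard Gr\"obner basis machinery for the spanning part while arguing linear independence directly from the definition of $\Delta_\prec(J)$.

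First I would handle linear independence by contradiction. Suppose that $M_1, \ldots, M_k$ are pairwise distinct monomials in $\Delta_\prec(J)$ and that $c_1, \ldots, c_k \in \mathbb{F}$ are scalars, not all zero, with $f := \sum_{i=1}^{k} c_i M_i \in J$. Then $f$ is a nonzero polynomial in $J$, and its leading monomial $\lm(f)$ must coincide with one of the $M_i$. But this contradicts membership of that $M_i$ in $\Delta_\prec(J)$, which by definition excludes every leading monomial of an element of $J$.

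For the spanning part, I would fix a Gr\"obner basis $G = \{g_1, \ldots, g_s\}$ of $J$ with respect to $\prec$, whose existence is standard. Given an arbitrary $f \in \mathbb{F}[X_1, \ldots, X_m]$, the multivariate division algorithm produces a remainder $r$ with $f - r \in J$ such that no term of $r$ is divisible by any $\lm(g_i)$. Because $G$ is a Gr\"obner basis, the ideal of leading terms of $J$ coincides with $\langle \lm(g_1), \ldots, \lm(g_s) \rangle$. It follows that no monomial appearing in $r$ is the leading monomial of any element of $J$, i.e., every such monomial lies in $\Delta_\prec(J)$. Therefore $f + J = r + J$ is an $\mathbb{F}$-linear combination of cosets $M + J$ with $M \in \Delta_\prec(J)$, which establishes spanning.

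The hard part is really that spanning step, since it relies on the full Gr\"obner basis apparatus, namely existence of a Gr\"obner basis, correctness of the multivariate division algorithm, and the characterization of Gr\"obner bases via leading-term ideals. These are classical results available in \cite{clo}, so my intention would be to cite them rather than reprove them; the argument itself is then essentially a one-line invocation of division with remainder combined with the Gr\"obner basis characterization.
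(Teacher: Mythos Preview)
Your argument is correct and is exactly the standard proof; the paper itself does not give a proof but simply cites \cite{clo}[Prop.\ 4, Sec.\ 5.3], which contains precisely the division-algorithm argument you outline. So you have supplied what the paper chose to outsource, and there is no divergence in approach.
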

In the following we concentrate on finite fields ${\mathbb{F}}_q$ and extend any given ideal $I \subseteq {\mathbb{F}}_q[X_1, \ldots , X_m]$ to $I_q=I+\langle X_1^q-X_1, \ldots , X_m^q-X_m\rangle$. Clearly, the variety ${\mathbb{V}}(I_q)$ is finite and from its elements $P_1, \ldots , P_n$ we obtain the map $${\mbox{ev}}: {\mathbb{F}}_q[X_1, \ldots , X_m]/I_q \rightarrow {\mathbb{F}}_q^n,$$ $${\mbox{ev}}(F+I_q)=(F(P_1), \ldots , F(P_n)).$$
This map is obviously a vector space homomorphism and it is well-known that it is in fact an isomorphism~\cite{lax}. For simplicity in the following we shall always write ${\mbox{ev}}(F)$ rather than ${\mbox{ev}}(F+I_q)$.\\
For any set of monomials $L \subseteq \Delta_\prec(I_q)$ we now define a linear code $$C(I,L)={\mbox{Span}}_{\mathbb{F}_q}\{{\mbox{ev}}(M) \mid M \in L\}\subseteq {\mathbb{F}}_q^n$$ 
the dimension of which equals $\# L$ due to Theorem~\ref{thefoot} and ${\mbox{ev}}$ being an isomorphism. Such a code is called a primary affine variety code and its dual $C^\perp(I,L)$ is said to be a dual affine variety code. To estimate the minimum distance of $C(I,L)$ we may apply the below corollary of Theorem~\ref{thefoot}, known as the footprint bound~\cite{onorin}.
\begin{corollary}\label{corfoot}
Given an ideal $J \subseteq {\mathbb{F}}_q [X_1, \ldots , X_m]$ the variety ${\mathbb{V}}(J_q)$ is of size $ \# \Delta_\prec (J_q)$.
\end{corollary}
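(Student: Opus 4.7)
The plan is to combine Theorem~\ref{thefoot} with the fact, just invoked in the paragraph preceding the corollary, that the evaluation map on $\mathbb{F}_q[X_1,\ldots,X_m]/J_q$ is an $\mathbb{F}_q$-linear isomorphism onto $\mathbb{F}_q^{|\mathbb{V}(J_q)|}$. Theorem~\ref{thefoot} tells us
\[
\dim_{\mathbb{F}_q}\bigl(\mathbb{F}_q[X_1,\ldots,X_m]/J_q\bigr)=\#\Delta_\prec(J_q),
\]
so once the isomorphism is in hand, the corollary is immediate from comparing dimensions with $\mathbb{F}_q^{|\mathbb{V}(J_q)|}$.

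The only substantive thing to justify is why the evaluation map really is an isomorphism (the corollary being stated for an arbitrary ideal $J$, one should not merely cite the remark made above for $I_q$). First I would observe that $J_q$ is radical: it contains $X_i^q-X_i$ for every $i$, and each of these polynomials factors into distinct linear factors over $\mathbb{F}_q$, so any $f\in\sqrt{J_q}$ already vanishes identically on $\mathbb{V}(J_q)$ and hence, by the strong Nullstellensatz applied over the algebraic closure together with Galois descent (or equivalently by reducing $f$ modulo the generators $X_i^q-X_i$ and then modulo $J$), lies in $J_q$. This yields injectivity of $\mbox{ev}$. Surjectivity is then Lagrange interpolation: since every point of $\mathbb{V}(J_q)$ has all coordinates in $\mathbb{F}_q$, one can exhibit a polynomial taking any prescribed value at a chosen point and vanishing at the others.

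Putting the three ingredients together, $\#\Delta_\prec(J_q)=\dim_{\mathbb{F}_q}\mathbb{F}_q[X_1,\ldots,X_m]/J_q=|\mathbb{V}(J_q)|$, which is the claimed equality. The main potential obstacle is the radicality argument; everything else is essentially bookkeeping. In the exposition I would therefore keep the proof short, with the radicality of $J_q$ as the one point emphasized, and simply quote Theorem~\ref{thefoot} and the already-asserted isomorphism property of $\mbox{ev}$ for the rest.
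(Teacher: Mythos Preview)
The paper does not actually give a proof of this corollary: it is simply stated as a known consequence of Theorem~\ref{thefoot} with a reference to~\cite{onorin}. Your argument supplies exactly the details the paper leaves implicit, and the approach---combine Theorem~\ref{thefoot} with the isomorphism $\mathbb{F}_q[X_1,\ldots,X_m]/J_q\cong\mathbb{F}_q^{\#\mathbb{V}(J_q)}$---is the standard one and is correct.

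One small comment: your parenthetical ``or equivalently by reducing $f$ modulo the generators $X_i^q-X_i$ and then modulo $J$'' does not quite stand on its own, since reducing modulo $J$ need not land you in $J_q$. The clean route is the one you sketch first (Nullstellensatz over $\overline{\mathbb{F}_q}$, noting that the $\overline{\mathbb{F}_q}$-points of $J_q$ already lie in $\mathbb{F}_q^m$), or alternatively observe directly that $\mathbb{F}_q[X_1,\ldots,X_m]/\langle X_i^q-X_i\rangle\cong\prod_{P\in\mathbb{F}_q^m}\mathbb{F}_q$ via evaluation, so every further quotient is again a product of copies of $\mathbb{F}_q$ indexed by a subset of $\mathbb{F}_q^m$; this gives both radicality and the isomorphism in one stroke. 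Either way, your conclusion is sound.
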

Consider namely a codeword $\vec{c}={\mbox{ev}}(F) \in C(I,L)$ (i.e.\ $F$ is a linear combination of monomials in $L$). Applying the above corollary to the ideal $J=I+\langle F \rangle$ we see that the Hamming weight of $\vec{c}$ equals 
\begin{eqnarray}
w_H(\vec{c}) =n-\# \Delta_\prec ( \langle F \rangle +I_q) =\# \Delta_\prec (I_q) \cap {\mbox{lm}}(\langle F\rangle +I_q)=\# \Box_\prec (F) \label{eqbox}
\end{eqnarray}
where lm denotes the leading monomial and $\Box_\prec (F)=\Delta_\prec (I_q) \cap {\mbox{lm}}(\langle F \rangle +I_q)$.  Knowing a Gr\"{o}bner basis for $\langle F\rangle+I_q$ would provide us with $\# \Box_\prec (F)$, assuming we know $n$, but as we shall need to consider classes of polynomials $F$, rather than individual ones, such a basis cannot be specified. However, if we calculate a Gr\"{o}bner basis $\{H_1(X_1, \ldots , X_m),\ldots ,$ $ H_s(X_1, \ldots , X_m)\}$ for $I_q$ with respect to $\prec$ we obtain full information on $\Delta_\prec(I_q)$ and  
the task then is to estimate how many monomials inside $\Delta_\prec(I_q)$ can be found as a leading monomial of 
a polynomial of the form
\begin{eqnarray}
K(X_1, \ldots , X_m)F(X_1, \ldots  ,X_m)+\sum_{i=1}^s P_i(X_1, \ldots , X_m) H_i(X_1, \ldots , X_m) , \label{eqgrpol}
\end{eqnarray}
where $K, P_1, \ldots , P_s$ are arbitrary polynomials. 

To estimate the minimum distance of a primary affine variety code $C(I,L)$ the most common approach~(e.g.\ \cite{handbook,GeilEvaluationCodes,MR3804810,MR3781399,MR4123876}) is to establish information on $\Box_\prec (F)$ using only information on ${\mbox{lm}}(F)$ and paying in the analysis no attention to the coefficients of lower terms. I.e.\ for each $M \in L$, one detects a set of monomials which is a subset of $\Box_\prec(F)$ for any $F$ having $M$ as leading monomial.
Using in~\cite{impprim} the concept of one-way well-behaving pairs the authors took the first step in the direction of employing information on the coefficients of the non-leading monomials in the (possible) support of $F$. The method from~\cite{geilozbudak} can be seen as a further development in this direction where for each class of polynomials with a given leading monomial ${\mbox{lm}}(F)$, starting from $F$, one applies a series of calculations involving a mix of multiplication by monomials and polynomial divisions, modulo polynomials in $\{F, H_1, \ldots , H_s\}$, the result being in each step a polynomial of the form~(\ref{eqgrpol}).
Writing $\Delta_\prec(I_q)=\{M_1, \ldots , M_n\}$ where the enumeration is done according to the ordering $\prec$ they consider $F=M_i + \sum_{u=1}^{i-1} a_u M_{i-u}$, $a_u \in {\mathbb{F}}_q$. Whenever during the process it is possible to establish conditions on the coefficients for which a substantial amount of monomials in $\Delta_\prec(I_q)$ can be demonstrated to be leading monomials of expressions of the form~(\ref{eqgrpol}) this is recorded and in the following calculations the conditions are assumed {\it{not}} to hold. The process stops when all possible combinations of coefficients $a_1, \ldots , a_{i-1}$ have been covered. We should mention that in continuation of~\cite{geilozbudak} the procedure has also been successfully implemented in~\cite{ps} to treat a family of codes defined from a particular hyperelliptic curve.

We now recall how the above procedure was applied to give a thorough treatment of a family of primary affine variety codes related to the Klein quartic $Y^3+X^3Y+X \in {\mathbb{F}}_8[X,Y]$. The reason for recalling such findings 
is two-fold. Firstly, our method for treating dual codes relies on our findings regarding primary codes, and secondly for the application of asymmetric quantum codes we will need information on both primary and dual codes. Furthermore, for the mentioned application we will need to enhance previous findings on minimum distances to results on relative distances. 

The monomial ordering that we apply is the weighted graded ordering $\prec_w$ defined by $X^{i_1}Y^{j_1} \prec_w X^{i_2}Y^{j_2}$ if either $2i_1+3j_1 < 2i_2+3j_2$ holds or if  $2i_1+3j_1 = 2i_2+3j_2$, but $j_1 <j_2$. 
The Gr\"{o}bner basis for $$I_8=\langle Y^3+X^3Y+X,X^8+X,Y^8+Y\rangle \subseteq {\mathbb{F}}_8[X,Y]$$ becomes $\{Y^3+X^3Y+X,X^8+X,X^7Y+Y\}$ from which the footprint can be seen to equal 
$$\Delta_{\prec_w}(I_8)=\{X^iY^j \mid 0 \leq i \leq 6, 0 \leq j \leq 2\} \cup \{X^7\}$$
corresponding to the fact that the number of affine roots of the Klein curve is 22.

Recall that given $i \in \{ 1, \ldots , 22\}$ the task is for $$F=M_i + \sum_{u=1}^{i-1} a_u M_{i-u}, {\mbox{ \ }} a_u \in {\mathbb{F}}_8$$
to consider an exhaustive series of cases of different combinations of the coefficients $a_1, \ldots , a_{i-1}$. In each case we determine monomials in $\Delta_{\prec_w}(I_8)$ for which a polynomial of the form~(\ref{eqgrpol}) exists having that monomial as leading monomial. It is straightforward to see that among such monomials we have those that are divisible by $M_i$. However, for $8$ particular choices of $M_i$ additional monomials are determined in~\cite{geilozbudak}. Table~\ref{tab1} through Table~\ref{tab8} explain the results for those $8$ cases. 
Here we use the notation 
\begin{eqnarray}
\langle \langle N_1, \ldots , N_u\rangle \rangle = \{M \in \Delta_{\prec_w}(I_8) \mid M {\mbox{ is divisible by some }} N_i, i\in \{1, \ldots , u\}\}\nonumber
\end{eqnarray}
and using this notation for each choice of coefficients we specify in column two monomials which
can be found as leading monomial of a polynomial in~(\ref{eqgrpol}). I.e.\ $\langle \langle N_1, \ldots , N_u \rangle \rangle \subseteq \Box_{\prec_w}(F)$. We stress that the last row in each of the tables is a conclusion that we entirely make for the purpose of treating dual codes in the present paper. What is listed here is the intersection of all the established sets of leading monomials $\langle \langle  \cdots \rangle \rangle$, the information of which being not relevant for the treatment of primary codes. Moreover, in the second column of the tables some entries are marked in bold. These are entries that we are able to add in addition to those established in~\cite{geilozbudak}. The reason for the entries in bold not to be established in~\cite{geilozbudak} is that these entries do not change the analysis for the primary codes, but we will need them to analyze the dual codes. We illustrate our remarks in an example.

\begin{example}
Consider $F(X,Y)=Y^2+a_1X^3+a_2XY+a_3X^2+a_4Y+a_5X+a_6$. Table~\ref{tab3} lists information on $\Box_{\prec_w}(F)$ for $11$ different cases which together cover all possible situations. For the first case $a_1 \notin \{ 1, 0\}$ the entry in the second column reads $\langle \langle Y^2,X^3Y,X^6\rangle \rangle$ meaning that 
\begin{eqnarray}
\{X^6,X^7, X^3Y,X^4Y, X^5Y,X^6Y, Y^2,XY^2,X^2Y^2,X^3Y^2,X^4Y^2,X^5Y^2,X^6Y^2\}\nonumber \\
\subseteq \Box_{\prec_w}(F).\nonumber 
\end{eqnarray}
As the number of elements in the listed set is $13$ we conclude that $w_H({\mbox{ev}}(F) )\geq 13$ whenever $a_1 \notin \{0,1\}$. Ignoring in the next $10$ lines  the entries in bold, we obtain for the considered coefficients information on the corresponding set $\Box_{\prec_w}(F)$ as established in~\cite{geilozbudak}. Observe, that the subsets of $\Box_{\prec_w}(F)$ that we establish are not identical, but that all subsets are of size at least $13$ from which we conclude that for all polynomials $F$ with ${\mbox{lm}}(F)=Y^2$ it holds that $w_H({\mbox{ev}}(F))\geq 13$. The entries in bold (which we add) have no implication for the treatment of the primary code, as for instance we cannot add anything to the first row of Table~\ref{tab3}, and therefore we cannot increase the estimate on the Hamming weight for general polynomial having $Y^2$ as leading monomial beyond $13$. However, for the dual codes we shall need information on the intersection of all $11$ sets listed in column 2 which is the reason for adding the entries in bold. The intersection is what we list in the last line of the table. For the considered choice of leading monomial the intersection is of size $12$ so we would not want to use that as the estimate on the Hamming weight of ${\mbox{ev}}(F)$. Finally, to understand why we can add the values in bold observe for instance that whenever we know that $XY$ is the leading monomial of a polynomial as in~(\ref{eqgrpol}) then from the last line of Table~\ref{tab2} we can conclude that also is $X^5$ (and monomials divisible by it). Similarly when $Y$ is included in column 2, then from Table~\ref{tab1} we see that also $X^4$ (and monomials divisible by it) can be added. 
\end{example}

\begin{table}
\begin{center}
\caption[h]{$Y+a_1X+a_2$}
\label{tab1}
\begin{tabular}{|l|l|}
\hline
Coefficients&Subset of $\Box_{\prec_w}(F)$\\
\hline 
$a_1 \neq 0$&$\langle \langle Y, X^4\rangle  \rangle $\\
\hline
$a_1=0, a_2 \neq 0$&$\langle \langle Y, X^3\rangle\rangle $\\
\hline
$a_1=a_2=0$&$\langle \langle Y,X \rangle \rangle $\\
\hline 
\hline
Intersection & $\langle \langle Y, X^4\rangle \rangle  $\\
\hline
\end{tabular}
\end{center}
\end{table}

\begin{table}
\begin{center}
\caption[h]{$XY+a_1X^2+a_2Y+a_3X+a_4$}
\label{tab2}
\begin{tabular}{|l|l|}
\hline
Coefficients&Subset of $\Box_{\prec_w}(F)$\\
\hline 
$a_1 \neq 0$&$\langle \langle XY, X^5\rangle \rangle$\\
\hline
$a_1=0, a_3 \neq 0$&$\langle \langle XY, X^4\rangle \rangle $\\
\hline
$a_1=a_3=a_4=0$&$\langle \langle XY,X^2 \rangle \rangle$\\
\hline
$a_1=a_3=0, a_4\neq0$&$\langle \langle XY,Y^2,X^5 \rangle \rangle$\\
\hline 
\hline
Intersection & $\langle \langle XY, X^5\rangle \rangle $\\
\hline
\end{tabular}
\end{center}
\end{table}

\begin{table}
\begin{center}
\caption[ht]{$Y^2+a_1X^3+a_2XY+a_3X^2+a_4Y+a_5X+a_6$}
\label{tab3}
\begin{tabular}{|l|l|}
\hline
Coefficients&Subset of $\Box_{\prec_w}(F)$\\
\hline 
$a_1 \notin \{0,1\}$&$\langle \langle Y^2, X^3Y,X^6 \rangle \rangle $\\
\hline
$a_1=0, a_3 \neq 0$&$\langle \langle Y^2, X^3Y, X^5\rangle \rangle $\\
\hline
$a_1=a_3=0, a_5 \neq 0$&$\langle \langle  Y^2,X^3Y,X^4 \rangle \rangle $\\
\hline
$a_1=a_3=a_5=0, a_6\neq0$&$\langle \langle Y^2,X^3 \rangle \rangle $\\
\hline
$a_1=a_3=a_5=a_6=0$&$\langle \langle Y^2,XY, \bm{X^5} \rangle \rangle $\\
\hline
$a_1=1, a_2 \neq 0$&$\langle \langle Y^2,X^4 \rangle \rangle $\\
\hline
$a_1=1, a_2=0, a_3\neq0$&$\langle \langle  Y^2,X^2Y,X^5 \rangle \rangle$\\
\hline
$a_1=1, a_2=a_3=0, a_4 \neq0$&$\langle \langle  Y^2,X^3 \rangle \rangle $\\
\hline
$a_1=1, a_2=a_3=a_4=0, a_5\neq0$&$\langle \langle  Y^2,XY,\bm{X^5} \rangle \rangle$\\
\hline
$a_1=1, a_2=a_3=a_4=a_5=0, a_6 \neq 0$&$\langle \langle Y , \bm{X^4} \rangle \rangle $\\
\hline
$a_1=1, a_2=a_3=a_4=a_5=a_6=0$&$\langle \langle Y^2,X \rangle \rangle $\\
\hline
\hline
Intersection & $\langle \langle Y^2, X^4Y,X^6\rangle \rangle $\\
\hline
\end{tabular}
\end{center}
\end{table}

\begin{table}
\begin{center}
\caption[h]{$X^2Y+a_1Y^2+a_2X^3+a_3XY+a_4X^2+a_5Y+a_6X+a_7$}
\label{tab4}
\begin{tabular}{|l|l|}
\hline
Coefficients&Subset of $\Box_{\prec_w}(F)$\\
\hline 
$a_2 \neq 0$&$\langle \langle X^2Y,X^6 \rangle \rangle $\\
\hline
$a_2=0, a_4 \neq 0$&$\langle \langle X^2Y,X^5\rangle \rangle $\\
\hline
$a_2=a_4=0, a_6 \neq 0$&$\langle \langle X^2Y,X^5 \rangle \rangle$\\
\hline
$a_2=a_4=a_6=0, a_7\neq1$&$\langle \langle  X^2Y,X^5 \rangle \rangle$\\
\hline
$a_2=a_4=a_6=0, a_7=1, a_3 \neq 0$&$\langle \langle X^2Y, {X^5} \rangle \rangle$\\
\hline
$a_2=a_4=a_6=a_3=0, a_7=1, a_5 \neq 0$&$\langle \langle X^2Y,X^5 \rangle \rangle$\\
\hline
$a_2=a_4=a_6=a_3=a_5=0, a_7=1$&$\langle \langle Y,\bm{X^4} \rangle \rangle $\\
\hline
\hline
Intersection & $\langle \langle X^2Y,X^6\rangle \rangle $\\
\hline
\end{tabular}
\end{center}
\end{table}

\begin{table}
\begin{center}
\caption[h]{$XY^2+a_1X^4+a_2X^2Y+a_3Y^2+a_4X^3+a_5XY+a_6X^2+a_7Y+a_8X+a_9$}
\label{tab5}
\begin{tabular}{|l|l|}
\hline
Coefficients&Subset of $\Box_{\prec_w}(F)$\\
\hline 
$a_1 \neq 1, a_1\neq 0$&$\langle \langle  XY^2,X^4Y,X^7\rangle \rangle $\\
\hline
$a_1=0, a_4 \neq 0$&$\langle \langle XY^2,X^4Y,X^6\rangle \rangle $\\
\hline
$a_1=a_4=0, a_6 \neq 0$&$\langle \langle XY^2,X^4Y,X^5 \rangle \rangle$\\
\hline
$a_1=a_4=a_6=0, a_8\neq0$&$\langle \langle XY^2,X^4 \rangle \rangle$\\
\hline
$a_1=a_4=a_6=a_8=0$&$\langle \langle XY^2, X^2Y,\bm{X^6} \rangle \rangle$\\
\hline
$a_1=1, a_2 \neq 0$&$\langle \langle XY^2,X^5 \rangle \rangle$\\
\hline
$a_1=1, a_2=0, a_3\neq a_4$&$\langle \langle XY^2, X^3Y,X^6 \rangle \rangle$\\
\hline
$a_1=1, a_2=0, a_3=a_4, a_5 \neq 0$&$\langle \langle XY^2,X^4 \rangle \rangle $\\
\hline
$a_1=1, a_2=a_5=0, a_3=a_4,  a_6\neq0$&$\langle \langle XY^2,X^2Y,\bm{X^6} \rangle \rangle$\\
\hline
$a_1=1, a_2=a_5=a_6=0, a_3=a_4, a_7 \neq 0$&$\langle \langle Y^2, {X^4} \rangle \rangle $\\
\hline
$a_1=1, a_2=a_5=a_6=a_7=0, a_3=a_4, a_8 \neq 0$&$\langle \langle XY^2,X^2Y,\bm{X^6} \rangle \rangle$\\
\hline
$a_1=1, a_2=a_5=a_6=a_7=a_8=0, a_3=a_4$&$\langle \langle XY^2,X^3 \rangle \rangle $\\
\hline
\hline
Intersection & $\langle \langle XY^2,X^5Y, X^6\rangle \rangle$\\
\hline
\end{tabular}
\end{center}
\end{table}

\begin{table}
\begin{center}
\caption[htbp]{$X^3Y+a_1XY^2+a_2X^4+a_3X^2Y+a_4Y^2+a_5X^3+a_6XY+a_7X^2+a_8Y+a_9X+a_{10}$}
\label{tab6}
\begin{tabular}{|l|l|}
\hline
Coefficients&Subset of $\Box_{\prec_w}(F)$\\
\hline 
An exhaustive set of ten different cases&$\langle \langle  X^3Y,X^7 \rangle \rangle$\\
\hline
\hline
Intersection & $\langle \langle X^3Y,X^7\rangle \rangle $\\
\hline
\end{tabular}
\end{center}
\end{table}

\begin{table*}
\begin{center}
\caption[htbp]{$X^2Y^2+a_1X^5+a_2X^3Y+a_3XY^2+a_4X^4+a_5X^2Y+a_6Y^2+a_7X^3+a_8XY+a_9X^2+a_{10}Y+a_{11}X+a_{12}$}
\label{tab7}
\begin{tabular}{|l|l|}
\hline
Coefficients&Subset of $\Box_{\prec_w}(F)$\\
\hline 
$a_1 \neq 1$&$\langle \langle  X^2Y^2,X^5Y\rangle \rangle  $\\
\hline
$a_1=1, a_2 \neq 0$&$\langle \langle X^2Y^2,X^6\rangle \rangle$\\
\hline
$a_1=1, a_2=0, a_3 \neq a_4$&$\langle \langle  X^2Y^2,X^4Y \rangle \rangle$\\
\hline
$a_1=1, a_2=0, a_3=a_4, a_5\neq0$&$\langle \langle  X^2Y^2,X^5 \rangle\rangle $\\
\hline
$a_1=1, a_2=a_5=0, a_3=a_4, a_6\neq a_7$&$\langle \langle  X^2Y^2, X^3Y,\bm{X^7} \rangle \rangle$\\
\hline
$a_1=1, a_2 =a_5=0, a_3=a_4, a_6=a_7, a_8\neq 0$&$\langle \langle X^2Y^2,X^5 \rangle \rangle $\\
\hline
$a_1=1, a_2=a_5=a_8=0, a_3=a_4, a_6=a_7, a_{9}\neq 0$&$\langle \langle  X^2Y^2, X^3Y,\bm{X^7}  \rangle \rangle$\\
\hline
$a_1=1, a_2=a_5=a_8=a_9=0, a_3=a_4, a_6=a_7, a_{10} \neq 1$&$\langle \langle X^2Y^2,X^5 \rangle \rangle $\\
\hline
$a_1=a_{10}=1, a_2=a_5=a_8=a_9=0, a_3=a_4,  a_6=a_7, $&\\
$a_{11} \neq0$&$\langle \langle  X^2Y^2,X^3Y,\bm{X^7} \rangle \rangle $\\
\hline
$a_1=a_{10}=1, a_2=a_5=a_8=a_9=a_{11}=0, a_3=a_4,$&\\
$ a_6=a_7, a_4 \neq 0$&$\langle \langle  X^2Y^2,{X^5} \rangle \rangle$\\
\hline
$a_1=a_{10}=1, a_2=a_5=a_8=a_9=a_{11}=a_4=0, a_3=a_4,$&\\
$a_6=a_7, a_{12} \neq 0$&$\langle \langle X^2Y^2,X^3Y,\bm{X^7} \rangle \rangle$\\
\hline
$a_1=a_{10}=1, a_2=a_5=a_8=a_9=a_{11}=a_4=a_{12}=0, a_3=a_4,$&\\
$a_6=a_7, a_{7} \neq 0$&$\langle \langle X^2Y^2,X^5 \rangle \rangle$\\
\hline
$a_1=a_{10}=1, a_2=a_5=a_8=a_9=a_{11}=a_4=a_{12}=a_7=0,$&\\
$ a_3=a_4,a_6=a_7$&$\langle \langle X^2Y,\bm{X^6} \rangle \rangle $\\
\hline
\hline
Intersection & $\langle \langle X^2Y^2,X^6Y\rangle \rangle  $\\
\hline
\end{tabular}
\end{center}
\end{table*}

\begin{table*}
\begin{center}
\caption[htbp]{$X^3Y^2+a_1X^6+a_2X^4Y+a_3X^2Y^2+a_4X^5+a_5X^3Y+$\\ $a_6XY^2+a_7X^4a_8X^2Y+a_9Y^2+a_{10}X^3+a_{11}XY+a_{12}X^2+a_{13}Y+a_{14}X+a_{15}$}
\label{tab8}
\begin{tabular}{|l|l|}
\hline
Coefficients&Subset of $\Box_{\prec_w}(F)$\\
\hline 
$a_1 \neq 1$&$\langle \langle X^3Y^2,X^6Y\rangle \rangle $\\
\hline
$a_1=1, a_2 \neq 0$&$\langle \langle X^3Y^2,X^7\rangle \rangle $\\
\hline
$a_1=1, a_2=0, a_3 \neq a_4$&$\langle \langle  X^3Y^2,X^5Y \rangle \rangle$\\
\hline
$a_1=1, a_2=0, a_3=a_4, a_5\neq0$&$\langle \langle X^3Y^2,X^6 \rangle \rangle $\\
\hline
$a_1=1, a_2=a_5=0, a_3=a_4, a_6\neq a_7$&$\langle \langle  X^3Y^2, X^4Y \rangle \rangle$\\
\hline
$a_1=1, a_2 =a_5=0, a_3=a_4, a_6=a_7, a_8\neq 0$&$\langle \langle X^2Y^2,\bm{X^6} \rangle \rangle$\\
\hline
$a_1=1, a_2=a_5=a_8=0, a_3=a_4, a_6=a_7, a_{9}\neq a_{10}$&$\langle \langle X^3Y, {\bm{X^7}} \rangle \rangle$\\
\hline
$a_1=1, a_2=a_5=a_8=0, a_3=a_4, a_9=a_{10}, a_{11} \neq 1$&$\langle \langle  XY^2 , {\bm{X^5Y}}, {\bm{X^6}}\rangle \rangle $\\
\hline
$a_1=1, a_2=a_5=a_8=a_{11}=0, a_3=a_4,  a_9=a_{10}$&$\langle \langle  X^3Y^2,X^4 \rangle \rangle$\\
\hline
\hline
Intersection & $\langle \langle  X^3Y^2\rangle \rangle $\\
\hline
\end{tabular}
\end{center}
\end{table*}

We conclude this section by collecting the established information on primary codes. First we introduce some notation for the general situation of primary affine variety codes. For each $M_i \in \Delta_{\prec}(I_q)$ let $\sigma(M_i)$ be the minimal number of monomials in $\Delta_\prec(I_q)$ which by some given method have been shown to be leading monomials of expressions of the form~(\ref{eqgrpol}), the minimum being taken over all polynomials $F$ having $M_i$ as leading monomial. Recall, that the relative distance between a pair of nested linear codes $C^{\prime \prime} \subsetneq C^{\prime }$ is given by
$$d(C^\prime, C^{\prime \prime} )=\min \{ w_H(\vec{c}) \mid \vec{c} \in C^{\prime} \backslash C^{\prime \prime} \}.$$
We then have the following theorem, the last part of which was not treated in~\cite{geilozbudak}, but which is included here due to its importance in connection with the CSS construction of asymmetric quantum codes (Section~\ref{sec4}).
\begin{theorem}\label{thebb}
The minimum distance of $C(I,L)$ is at least $$\min \{ \sigma(M) \mid M \in L\}.$$
Let  $L_2 \subsetneq L_1 \subseteq \Delta_\prec(I_q)$ and define 
\begin{equation}
n(L_1,L_2)=\min \{i \mid M_i \in L_1 \backslash L_2\}.\nonumber 
\end{equation} 
Then the relative distance $d(C (I, L_1),C(I,L_2) )$ is greater than or equal to
$$\min \{ \sigma(M_i) \mid M_i \in L_1, n(L_1,L_2) \leq i \}.$$  
\end{theorem}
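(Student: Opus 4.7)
The plan is to reduce both claims to equation~(\ref{eqbox}), $w_H(\ev(F))=\#\Box_\prec(F)$, combined with the defining property of $\sigma$. For the first assertion, given a nonzero codeword $\vec{c}\in C(I,L)$, Theorem~\ref{thefoot} supplies a unique representative $F=\sum_{M\in L} c_M M$ with $\ev(F)=\vec{c}$. Its leading monomial $M_i=\lm(F)$ belongs to $L$, and since $\sigma(M_i)$ is by definition the infimum of $\#\Box_\prec(F')$ over all polynomials $F'$ whose leading monomial is $M_i$, one obtains
$$w_H(\vec{c})=\#\Box_\prec(F)\geq \sigma(M_i)\geq \min\{\sigma(M)\mid M\in L\}.$$

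For the relative-distance statement, I would pick any $\vec{c}\in C(I,L_1)\setminus C(I,L_2)$ and again appeal to Theorem~\ref{thefoot} to write $\vec{c}=\ev(F)$ uniquely with $F\in \mathrm{Span}_{\mathbb{F}_q}(L_1)$. Because $\vec{c}\notin C(I,L_2)$, the expansion of $F$ must involve at least one monomial $M_{i'}\in L_1\setminus L_2$ with nonzero coefficient; otherwise $F$ would lie in $\mathrm{Span}_{\mathbb{F}_q}(L_2)$ and $\vec{c}$ would belong to $C(I,L_2)$. Consequently the leading monomial $M_j=\lm(F)$ satisfies $j\geq i'\geq n(L_1,L_2)$, while trivially $M_j\in L_1$, so $M_j$ lies in the indexing set of the stated minimum. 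Equation~(\ref{eqbox}) together with the definition of $\sigma$ then gives
$$w_H(\vec{c})\geq \sigma(M_j)\geq \min\{\sigma(M_i)\mid M_i\in L_1,\ n(L_1,L_2)\leq i\},$$
and taking the infimum over all such $\vec{c}$ completes the argument.

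The proof is essentially a mechanical unpacking of definitions once equation~(\ref{eqbox}) and the uniqueness of the footprint-basis expansion are available, so no serious obstacle is expected. The only point worth flagging is the implication that $\vec{c}\notin C(I,L_2)$ forces the support of $F$ to meet $L_1\setminus L_2$; this relies precisely on the uniqueness furnished by Theorem~\ref{thefoot}, as without it one could not rule out a competing representative lying entirely in $\mathrm{Span}_{\mathbb{F}_q}(L_2)$.
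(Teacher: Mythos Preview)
Your proof is correct and follows essentially the same approach as the paper's own argument: represent the codeword by the unique $F$ with support in $L_1$, observe that membership in $C(I,L_1)\setminus C(I,L_2)$ forces a monomial from $L_1\setminus L_2$ into the support, and hence $M_{n(L_1,L_2)}\preceq\lm(F)$. One small wording issue: you describe $\sigma(M_i)$ as ``the infimum of $\#\Box_\prec(F')$,'' but by the paper's definition $\sigma(M_i)$ is only a lower bound on $\#\Box_\prec(F')$ (the number of monomials \emph{shown by a given method} to lie in $\Box_\prec(F')$); the inequality $\#\Box_\prec(F)\geq\sigma(M_i)$ you use is nevertheless valid, so the argument stands.
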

\begin{proof}
To see the last part note that if $\vec{c} \in C(I,L_1) \backslash C(I,L_2)$ then $\vec{c}={\mbox{ev}}(F)$ for some $F$ with support in $L_1$ and with at least one monomial in the support not belonging to $L_2$. The last property implies that $M_{n(L_1,L_2)} \preceq {\mbox{lm}}(F)$. 
\end{proof}

For later reference, for $1 \leq s \leq n$ we define $E(s)={\mbox{Span}}_{\mathbb{F}_q}\{M_1, \ldots , M_s\}$ and $\tilde{E}(\delta) ={\mbox{Span}}_{\mathbb{F}_q}\{ M \mid \sigma(M) \geq \delta\}$. The latter code is said to be of designed minimum distance $\delta$ and as far as our analysis goes these codes have at least as good parameters as the first mentioned codes. 

Our treatment above of the Klein quartic immediately translates into the  estimates $\sigma(M_i)$, $i=1, \ldots , 22$ in Figure~\ref{figallprime}, from which it is straightforward to determine the dimension and to estimate the minimum distance of any corresponding code $E(s)$ and $\tilde{E}(\delta)$.
\begin{figure}
$$
\begin{array}{cccccccc}
M_{7}&M_{10}&M_{13}&M_{16}&M_{19}&M_{21}&M_{22}& \\
M_3&M_5&M_8&M_{11}&M_{14}&M_{17}&M_{20}&\\
M_1&M_2&M_4&M_6&M_9&M_{12}&M_{15}&M_{18}\\
\ \\
Y^2&XY^2&X^2Y^2&X^3Y^2&X^4Y^2&X^5Y^2&X^6Y^2\\
Y&XY&X^2Y&X^3Y&X^4Y&X^5Y&X^6Y\\
1&X&X^2&X^3&X^4&X^5&X^6&X^7\\
\ \\
13&10&7&5&3&2&1\\
18&15&12&9&6&4&2\\
22&19&16&13&10&7&4&1
\end{array}
$$
\caption{As a conclusion of Table~\ref{tab1} through Table~\ref{tab8} plus the observation prior to them, the figure contains in the lower part the estimates $\sigma(M)$ for all $M \in \Delta_{\prec_w}(I_8)$ }
 \label{figallprime}
\end{figure}

\section{From bounds on primary codes to bounds on dual codes}\label{sec2point5}

In this section we start by enhancing Theorem~\ref{thebb} to cover the general situation of primary linear codes. From that we then devise a result for general dual linear codes which is finally translated to the language of affine variety codes to obtain the counter part of Theorem~\ref{thebb} for dual affine variety codes. Theorem~\ref{the1} below in our opinion captures the very essence of Feng-Rao theory for primary codes, although to the best of our knowledge it has not been reported in this general version before. Recall that given vectors $\vec{a}=(a_1, \ldots , a_n)$ and $\vec{b}=(b_1, \ldots , b_n)$ the componentwise product is given by $\vec{a} \ast \vec{b}=(a_1b_1, \ldots , a_nb_n)$.

\begin{theorem}
\label{the1}
The following three statements are equivalent for a word $\vec{c} \in {\mathbb{F}}_q^n \backslash \{ \vec{0}\}$:
\begin{enumerate}
\item $w_H(\vec{c}) = w$ 
\item $w$ is the maximal integer for which there exists a vector space $V \subseteq {\mathbb{F}}_q^n$ of dimension $w$ such that for any $\vec{v} \in V \backslash \{ \vec{0}\}$ it holds that $\vec{c} \ast \vec{v} \neq \vec{0}$.
\item $w$ is the maximal integer for which there exists a linearly independent set\\
$\{ \vec{w}_1, \ldots , \vec{w}_w\} \subseteq {\mathbb{F}}_q^n$ and a corresponding set of vectors $\{ \vec{v}_1, \ldots , \vec{v}_w\} \subseteq {\mathbb{F}}_q^n$ such that $\vec{c}\ast \vec{v}_1=\vec{w}_1, \ldots , \vec{c} \ast \vec{v}_w=\vec{w}_w$.
\end{enumerate}
\end{theorem}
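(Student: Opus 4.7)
The plan is to pivot everything through the linear map $\phi_{\vec{c}}:\mathbb{F}_q^n \to \mathbb{F}_q^n$ defined by $\phi_{\vec{c}}(\vec{v})=\vec{c}\ast \vec{v}$. Letting $S \subseteq \{1,\ldots,n\}$ denote the support of $\vec{c}$, so that $|S|=w_H(\vec{c})$, one sees immediately that the image of $\phi_{\vec{c}}$ is the coordinate subspace $V_S=\{\vec{u} \mid u_i=0 \text{ for } i\notin S\}$ of dimension $w_H(\vec{c})$, while $\ker \phi_{\vec{c}}=\{\vec{v} \mid v_i=0 \text{ for } i\in S\}$ has dimension $n-w_H(\vec{c})$. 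Every step then becomes a short verification using the rank-nullity information of $\phi_{\vec{c}}$.

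For the implication $(1)\Rightarrow (2)$, I would take $V=V_S$ as a witness: by construction $V\cap\ker \phi_{\vec{c}}=\{\vec{0}\}$, so $\vec{c}\ast \vec{v}\neq \vec{0}$ for every nonzero $\vec{v}\in V$. To see that no larger $V$ works, suppose $\dim V > w_H(\vec{c})$. Since $\dim \ker \phi_{\vec{c}} = n-w_H(\vec{c})$, the standard dimension formula gives
\[
\dim (V\cap \ker \phi_{\vec{c}})\ge \dim V+\dim \ker \phi_{\vec{c}} -n >0,
\]
producing a nonzero $\vec{v}\in V$ with $\vec{c}\ast \vec{v}=\vec{0}$, contradicting the hypothesis on $V$.

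For $(2)\Rightarrow (3)$, choose any basis $\vec{v}_1,\ldots,\vec{v}_w$ of $V$ and put $\vec{w}_i=\vec{c}\ast \vec{v}_i$. A relation $\sum \alpha_i \vec{w}_i=\vec{0}$ rewrites as $\vec{c}\ast (\sum \alpha_i \vec{v}_i)=\vec{0}$; property (2) then forces $\sum \alpha_i \vec{v}_i=\vec{0}$, hence all $\alpha_i=0$, giving the required linear independence of the $\vec{w}_i$'s. For $(3)\Rightarrow (1)$, I would use that every $\vec{w}_i=\vec{c}\ast \vec{v}_i$ lies in the image of $\phi_{\vec{c}}$, which has dimension $w_H(\vec{c})$; any linearly independent collection has at most that size, so the maximal $w$ in (3) is $\le w_H(\vec{c})$. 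The reverse inequality is obtained by running $(1)\Rightarrow (2)\Rightarrow (3)$ starting from the actual weight, which exhibits a valid collection of size $w_H(\vec{c})$.

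There is no real obstacle: the whole argument amounts to tracking $\operatorname{im}\phi_{\vec{c}}$ and $\ker \phi_{\vec{c}}$. The only thing to be careful about is the word \emph{maximal} in statements (2) and (3); one has to exhibit a witness of the claimed size \emph{and} rule out larger ones, which is exactly what the dimension count on $\ker \phi_{\vec{c}}$ achieves.
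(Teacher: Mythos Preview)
Your proof is correct and follows essentially the same approach as the paper's. Both arguments hinge on the same two observations: a witness $V$ of dimension $w_H(\vec{c})$ built from the support of $\vec{c}$, and the fact that any subspace of larger dimension must meet $\ker\phi_{\vec{c}}$ nontrivially. The only difference is cosmetic: you organize everything through the linear map $\phi_{\vec{c}}$ and invoke the dimension formula $\dim(V\cap\ker\phi_{\vec{c}})\ge\dim V+\dim\ker\phi_{\vec{c}}-n$, whereas the paper uses standard basis vectors for the witness and a pigeonhole argument (two vectors in $V$ agreeing on the support of $\vec{c}$) in place of the dimension count. The content is the same.
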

\begin{proof}\ \\
{{1. $\Leftrightarrow$ 2.:}} \ \ 
It is enough to show that $w_H(\vec{c})\geq w$ if and only if there exists a space $V$ of dimension at least $w$ satisfying the conditions in 2. 
We first observe that if $w_H(\vec{c}) \geq w$ then a space spanned by $w$ pairwise different standard vectors with a $1$ in a position where the corresponding entry of $\vec{c}$ is non-zero satisfies the conditions in 2.  Next, let $V$ be a vector space of dimension at least $w$ satisfying the conditions of 2. Aiming for a contradiction assume $w_H ( \vec{c}) < w$. But then there exist two different vectors in $V$ which have identical entries in those positions where $\vec{c}$ is non-zero, and therefore the componentwise product with $\vec{c}$ are the same. The difference between the vectors belongs to $V\backslash \{ \vec{0}\}$, but satisfies that the componentwise product with $\vec{c}$ equals $\vec{0}$ which is a contradiction.\\
{{2. $\Leftrightarrow$ 3.:}} \ \ 
It is enough to prove that the conditions of 2. are satisfied for a number $w$ if and only if the conditions of 3. are satisfied for the same number. First assume that the conditions of 2. hold for a given $w$.
The set $\{\vec{c} \ast \vec{v} \mid \vec{v} \in V\}$ is a vector space, but as by assumption $\vec{c} \ast \vec{v}^\prime \neq \vec{c} \ast \vec{v}^{\prime \prime}$ for different vectors $\vec{v}^\prime, \vec{v}^{\prime \prime} \in V$, this vector space is of dimension $w$ implying that the conditions of 3 hold. Next assume that the conditions of 3. hold for a given $w$.  We then have
$$\vec{c} \ast \sum_{i=1}^w a_i \vec{v}_i = \sum_{i=1}^w a_i \vec{c} \ast \vec{v}_i = \sum_{i=1}^w a_i \vec{w}_i \neq \vec{0},$$
when not all coefficients $a_1, \ldots , a_w$ equal $0$.
In particular $\{\vec{v}_1, \ldots , \vec{v}_w\}$ must be a linearly independent set as $\vec{c} \ast \vec{0}=\vec{0}$, implying that the conditions of 2. hold.
\end{proof}

\begin{remark}\label{remprimi}
Theorem~\ref{thebb} can be viewed as being a consequence of Theorem~\ref{the1}. To see this, note that if given $F(X_1, \ldots , X_m)$ with support in $\Delta_\prec(I_q)$ we have established polynomials $G_1(X_1, \ldots , X_m), \ldots , G_\sigma(X_1, \ldots , X_m)$ as in~(\ref{eqgrpol}) having pairwise different leading monomials all of which belong to $\Delta_\prec(I_q)$ then $\vec{w}_1={\mbox{ev}}(G_1), \ldots , \vec{w}_\sigma={\mbox{ev}}(G_\sigma)$ are linearly independent and for $\vec{c}={\mbox{ev}}(F)$ they satisfy the conditions of part 3 in Theorem~\ref{the1}. Actually, the implication also holds in the reverse direction, meaning that Theorem~\ref{the1} can be viewed as being a consequence of the results in the previous section. The reason for this is that equality holds in~(\ref{eqbox}).
\end{remark}
We next show that Theorem~\ref{the1} implies a bound for dual codes.
\begin{corollary}
\label{cor1}
Consider a basis $\{ \vec{b}_1, \ldots , \vec{b}_n \}$ for ${\mathbb{F}}_q^n$ as a vector space over ${\mathbb{F}}_q$. Given a non-zero word $\vec{c} \in {\mathbb{F}}_q^n$, let $m$ be the smallest index such that $\vec{c} \cdot \vec{b}_m \neq 0$. If there exists a vector space $V \subseteq {\mathbb{F}}_q^n$ of dimension $w$ such that for any $\vec{v} \in V \backslash \{0\}$ a word $ \vec{u} \in {\mathbb{F}}_q^n$ exists  with $$\vec{v} \ast \vec{u} \in {\mbox{Span}}_{\mathbb{F}_q}\{\vec{b}_1, \ldots , \vec{b}_m\} \backslash {\mbox{Span}}_{\mathbb{F}_q} \{ \vec{b}_1, \ldots , \vec{b}_{m-1}\}$$
then $w_H(\vec{c}) \geq w$.
\end{corollary}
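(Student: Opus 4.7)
The plan is to derive the corollary directly from the equivalence $1 \Leftrightarrow 2$ in Theorem~\ref{the1}: I will show that the hypothesised space $V$ itself already satisfies the property that $\vec{c}\ast \vec{v}\neq \vec{0}$ for every $\vec{v}\in V\setminus\{\vec{0}\}$. Once that is established, part~2 of Theorem~\ref{the1} immediately yields $w_H(\vec{c})\geq \dim V = w$.

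To verify the property, I would fix an arbitrary $\vec{v}\in V\setminus\{\vec{0}\}$ and invoke the assumption to obtain a word $\vec{u}\in \mathbb{F}_q^n$ such that
$$\vec{v}\ast \vec{u} = \sum_{i=1}^{m} \lambda_i \vec{b}_i, \qquad \lambda_m\neq 0.$$
The key computational step is the standard identity $\vec{c}\cdot(\vec{v}\ast \vec{u}) = (\vec{c}\ast \vec{v})\cdot \vec{u}$, which follows entrywise from commutativity of multiplication in $\mathbb{F}_q$. Combining this with the minimality of $m$ (which gives $\vec{c}\cdot \vec{b}_i=0$ for $i<m$) yields
$$(\vec{c}\ast \vec{v})\cdot \vec{u} \;=\; \vec{c}\cdot (\vec{v}\ast \vec{u}) \;=\; \sum_{i=1}^{m}\lambda_i(\vec{c}\cdot \vec{b}_i)\;=\;\lambda_m(\vec{c}\cdot \vec{b}_m)\;\neq\; 0.$$
Consequently $\vec{c}\ast \vec{v}$ cannot be the zero vector, which is exactly what was needed.

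There is no real obstacle beyond spotting the duality trick: the hypothesis is phrased in terms of componentwise products landing in a designated part of the chosen basis, and the conclusion concerns the Hamming weight of $\vec{c}$, so the bridge between them is precisely the identity $\vec{c}\cdot (\vec{v}\ast \vec{u}) = (\vec{c}\ast \vec{v})\cdot \vec{u}$ together with the observation that the minimality of $m$ forces the inner product with $\vec{c}$ to pick out exactly the coefficient of $\vec{b}_m$. Once the chain of implications is written out, the result falls out of Theorem~\ref{the1} in one line, and it is worth noting (in parallel to Remark~\ref{remprimi}) that no further structure of the basis $\{\vec{b}_1,\ldots ,\vec{b}_n\}$ is used beyond its being a basis.
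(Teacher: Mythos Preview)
Your proof is correct and follows essentially the same approach as the paper: you use the identity $\vec{c}\cdot(\vec{v}\ast\vec{u})=(\vec{c}\ast\vec{v})\cdot\vec{u}$ together with the minimality of $m$ to conclude $\vec{c}\ast\vec{v}\neq\vec{0}$ for every nonzero $\vec{v}\in V$, and then invoke Theorem~\ref{the1}. The paper's own proof is the same argument compressed into two sentences.
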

\begin{proof}
We have $\vec{c}\cdot (\vec{v} \ast \vec{u}) \neq 0$ which can be rewritten $(\vec{c} \ast \vec{v}) \cdot \vec{u} \neq 0$. But then $\vec{c}\ast \vec{v} \neq \vec{0}$ and the result follows from Theorem~\ref{the1}. 
\end{proof}

\begin{remark}
Corollary~\ref{cor1} can be viewed as being a generalization of all previous versions of the Feng-Rao bound for dual codes (e.g.~\cite{FR24,FR1,FR2,MM,miura1998linear,MR1268660,agismm,salazar}), except~\cite{geilmartin2013further}[Th.\ 19] which has another flavor. It also has a strong relation to~\cite{MR2332494}.
\end{remark}
Combining Corollary~\ref{cor1} and the arguments in Remark~\ref{remprimi} we obtain a bound on the minimum distance as well as relative distance of  dual affine variety codes having the same flavor as Theorem~\ref{thebb}.
Write as in Section~\ref{sec2} 
$$\Delta_\prec (I_q)=\{ M_1, \ldots, M_n\}$$
where the enumeration is according to the fixed monomial ordering $\prec$. For $i=1, \ldots , n$ define
\begin{eqnarray}
\Lambda(M_i)&=&\{ M \in \Delta_\prec(I_q) | {\mbox{ for each }} F {\mbox{ with }} {\mbox{lm}}(F)=M 
{\mbox{ and }} {\mbox{Supp}}(F) \subseteq \Delta_\prec(I_q) \nonumber \\
&& {\mbox{ \ \ \ there exists an expression of the form (\ref{eqgrpol}) with leading monomial }} \nonumber \\
&&{\mbox{ \ {\hspace{9.5cm}}  equal to }} M_i\} \nonumber
\end{eqnarray}
Here, ${\mbox{Supp}}$ means the support. Let for $i=1, \ldots, n$, $\mu(M_i)$ be a lower bound estimate on $\# \Lambda(M_i)$. We then have the following result.

\begin{theorem}\label{thebbb}
Let $L \subseteq \Delta_\prec(I_q)$. Then the minimum distance of $C^\perp(I,L)$ is at least $$\min \{ \mu(M) \mid M \notin L\}.$$
Consider  $L_2 \subsetneq L_1\subseteq \Delta_\prec(I_q)$ and define 
\begin{equation}
m(L_1)=\max \{ i \mid M_i \in L_1\}.\nonumber 
\end{equation} 
The relative distance $d(C^\perp (I, L_2),C^\perp (I,L_1)) $ is greater than or equal to
$$\min \{ \mu(M_i) \mid M_i \notin L_2, i \leq m(L_1)\}.$$  
\end{theorem}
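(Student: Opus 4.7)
My plan is to deduce both statements from Corollary~\ref{cor1} applied to the ordered basis $\vec{b}_i={\mbox{ev}}(M_i)$ of ${\mathbb{F}}_q^n$, which is a basis by Theorem~\ref{thefoot} combined with the fact that ${\mbox{ev}}$ is an isomorphism. Given a non-zero $\vec{c}\in C^\perp(I,L)$, let $m$ be the smallest index with $\vec{c}\cdot\vec{b}_m\neq 0$. Because $\vec{c}$ is orthogonal to every $\vec{b}_i$ with $M_i\in L$, I immediately obtain $M_m\notin L$, so it suffices to exhibit a subspace $V\subseteq{\mathbb{F}}_q^n$ of dimension at least $\mu(M_m)$ satisfying the hypothesis of Corollary~\ref{cor1} with respect to this basis and this $m$.

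The candidate is $V={\mbox{Span}}_{{\mathbb{F}}_q}\{{\mbox{ev}}(M)\mid M\in\Lambda(M_m)\}$; its dimension equals $\#\Lambda(M_m)\geq\mu(M_m)$ because $\{{\mbox{ev}}(M)\mid M\in\Delta_\prec(I_q)\}$ is linearly independent. To check the hypothesis of Corollary~\ref{cor1}, take any $\vec{v}\in V\setminus\{\vec{0}\}$, write $\vec{v}={\mbox{ev}}(F)$ for the polynomial $F=\sum_{M\in\Lambda(M_m)}\beta_M M$, and let $M_j={\mbox{lm}}(F)\in\Lambda(M_m)$. Since ${\mbox{Supp}}(F)\subseteq\Delta_\prec(I_q)$ and $M_j\in\Lambda(M_m)$, the defining property of $\Lambda(M_m)$ produces polynomials $K,P_1,\ldots,P_s$ for which $G:=KF+\sum_i P_iH_i$ has leading monomial $M_m\in\Delta_\prec(I_q)$. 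A short check shows that no Gröbner-reduction step can alter the coefficient of $M_m$ in $G$: no ${\mbox{lm}}(H_i)$ divides $M_m$, and every elimination step introduces only monomials strictly below the one it removes. Hence the normal form of $G$ equals $\gamma M_m+({\mbox{terms}}\prec M_m)$ with $\gamma\neq 0$, giving
$$\vec{v}\ast{\mbox{ev}}(K)={\mbox{ev}}(KF)={\mbox{ev}}(G)\in{\mbox{Span}}\{\vec{b}_1,\ldots,\vec{b}_m\}\setminus{\mbox{Span}}\{\vec{b}_1,\ldots,\vec{b}_{m-1}\},$$
which is exactly the condition of Corollary~\ref{cor1}. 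The corollary then delivers $w_H(\vec{c})\geq\mu(M_m)\geq\min\{\mu(M)\mid M\notin L\}$.

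The relative-distance assertion follows by running the same argument on $\vec{c}\in C^\perp(I,L_2)\setminus C^\perp(I,L_1)$. The derivation of $M_m\notin L_2$ is unchanged. In addition, $\vec{c}\notin C^\perp(I,L_1)$ forces $\vec{c}\cdot\vec{b}_j\neq 0$ for some $j$ with $M_j\in L_1$; hence $m\leq j\leq m(L_1)$, which places the bound $\mu(M_m)$ inside the claimed minimum.

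The principal obstacle I anticipate is the translation step in the second paragraph: one has to verify that the condition built into $\Lambda(M_m)$, phrased at the polynomial level in terms of expressions of the form~(\ref{eqgrpol}), converts cleanly---via the componentwise-product identity ${\mbox{ev}}(K)\ast{\mbox{ev}}(F)={\mbox{ev}}(KF)$ together with the invariance of footprint monomials under Gröbner reduction---into Corollary~\ref{cor1}'s componentwise-product hypothesis on the ordered basis $\{\vec{b}_1,\ldots,\vec{b}_n\}$. Once this identification is made, everything else is routine bookkeeping.
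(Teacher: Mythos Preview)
Your proof is correct and follows essentially the same approach as the paper: apply Corollary~\ref{cor1} with the ordered basis $\vec{b}_i={\mbox{ev}}(M_i)$, identify $m$ and note $M_m\notin L$ (resp.\ $M_m\notin L_2$ and $m\leq m(L_1)$), then build $V$ from $\Lambda(M_m)$ and translate the defining property of $\Lambda(M_m)$ into the componentwise-product condition via ${\mbox{ev}}(KF)={\mbox{ev}}(G)$. The paper's proof is a two-line sketch pointing to Corollary~\ref{cor1} and Remark~\ref{remprimi}; you have simply supplied the details that this sketch leaves implicit, including the Gr\"obner-reduction observation that the $M_m$-coefficient survives normal-form computation because $M_m\in\Delta_\prec(I_q)$.
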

\begin{proof}
The first part follows from Corollary~\ref{cor1} and similar arguments as in Remark~\ref{remprimi}. Let $\vec{c}\in C^\perp(I,L_2)$ but $\vec{c} \notin C^\perp (I,L_1)$, and define as in Corollary~\ref{cor1} $m$ to be the smallest index such that $\vec{c} \cdot {\mbox{ev}}(M_m) \neq 0$. From the first property it follows that $M_m \notin L_2$, and from the latter that $m \leq m(L_1)$.
\end{proof}
We shall apply two code constructions, namely $\tilde{C}(\delta)=C^\perp (I,L)$ with
\begin{equation*}
L=\{M \mid \mu (M) < \delta \}
\end{equation*}
and $C(s)=C^\perp(I,L)$ with 
\begin{equation*}
L=\{M_1, \ldots , M_s\}.
\end{equation*}
The first code is so to speak of designed minimum distance $\delta$ and clearly it is of dimension at least that of any code $C(s)$ for which we know that the minimum distance is at least $\delta$. 
\begin{remark}
Writing $C(0)={\mathbb{F}}_q^n$, by definition $\mu(M_i)$ is a lower bound estimate on the minimal Hamming weight of a code word in $C(i-1) \backslash C(i)$, or in other words on $d(C(i-1),C(i))$.
\end{remark}
We conclude this section by observing that in the spirit of~\cite{salazar} it is possible to formulate a bound that can potentially be even sharper than Theorem~\ref{thebbb}. For the Klein quartic and the given $\prec_w$ this enhancement does not produce better results and we therefore only give a very brief description.  For each $S \subseteq \Delta_\prec(I_q)$ define

\begin{eqnarray} 
\Lambda(M_i,S)&=&\{ M \in S | {\mbox{ for each }} F {\mbox{ with }} {\mbox{lm}}(F)=M 
{\mbox{ and }} {\mbox{Supp}}(F) \subseteq S \nonumber \\
&& {\mbox{ \ \ there exists an expression of the form (\ref{eqgrpol}) with leading monomial }} \nonumber \\
&&{\mbox{ \ {\hspace{9.5cm}} equal to }} M_i\}. \nonumber 
\end{eqnarray}

The idea then is that for each $i$ one can choose $S=S_i$ in such a way that $\Lambda(M_i,S_i)$ becomes as large as possible and possibly larger than $\Lambda(M_i)$, the bound in Theorem~\ref{thebbb} being then possibly improved by replacing the latter with the former.

\section{Dual affine variety codes from the Klein quartic}\label{sec3}
We now apply the method from the previous section on dual affine variety codes to the special case of the Klein quartic. Throughout this section therefore $I =\langle Y^3+X^3Y+X\rangle  \subseteq {\mathbb{F}}_8[X,Y]$ and $\prec$ equals $\prec_w$ as defined in Section~\ref{sec2}. To apply Theorem~\ref{thebbb} we collect information on $\Lambda(M_i)$ as follows. Firstly, for any $X^iY^j \in \Delta_{\prec_w}(I_8)$ we have 
\begin{eqnarray}
\{ X^{i^\prime}Y^{j^\prime} \mid 0\leq i^\prime \leq i, 0 \leq j^\prime \leq j\} \subseteq \Lambda(X^iY^j).\label{eqtrivial}
\end{eqnarray}
Additional information can be derived by carefully inspecting the last row in Table~\ref{tab1} through Table~\ref{tab7} (Table~\ref{tab8} turns out to contribute with no extra information as the last row in it consists of nothing but the monomials divisible by the $M_i$ under consideration). 
\begin{example}
In this example we derive information on $\Lambda(X^6)$. From~(\ref{eqtrivial}) we see that $\{ 1, X,X^2,X^3,X^4,X^5,X^6\}$ is contained herein. Additional elements are found by inspecting for which of Tables~\ref{tab1} through \ref{tab7} we have that $X^6$ belongs to the intersection specified in the last row. This happens for Tables~\ref{tab1} through \ref{tab5} and therefore also $Y, XY, X^2Y, Y^2, XY^2$ belong to $\Lambda(X^6)$.
\end{example}
In Section~\ref{sec4.5} we shall demonstrate that the size of the subsets of $\Lambda(M_i)$ as depicted in Table~\ref{tab9} in a considerable amount of cases gives the true value of $d(C(i-1),C(i))$. However, it is also shown in Example~\ref{exfino} of that section that $$\mu(M_{18}=X^7)=\# (\{1, X, \ldots , X^7\}\cup \{ Y, XY, X^2Y, Y^2, XY^2, X^3Y\})=14$$ is not  the true value of $d(C(17),C(18))$ as suggested by Table~\ref{tab9}, but that this actually equals $15$. 
We collect our findings, including the result of Example~\ref{exfino}, in Table~\ref{tab9} from which we obtain the lower bound estimates on $d(C(i-1),C(i))$ as presented in Figure~\ref{figalldual}.
\begin{table}
\caption{Information on $\Lambda(M)$ for the Klein quartic and $\prec_w$}
\label{tab9}
\begin{center}
\begin{tabular}{|c|l|}
\hline
$M$ & Established subset of $\Lambda (M)$\\
\hline
\hline
$X^4$&$\{1,X,X^2,X^3,X^4\} \cup \{Y\}$\\
\hline
$X^5$&$\{1, X, X^2,X^3,X^4,X^5\}\cup \{Y, XY\}$\\
\hline
$X^6$&$\{1, X, X^2,X^3,X^4,X^5,X^6\} \cup \{Y, XY, X^2Y, Y^2, XY^2\}$\\
\hline
$X^7$&$\{1, X, X^2,X^3,X^4,X^5,X^6,X^7\} \cup \{Y, XY, X^2 Y, Y^2, XY^2, X^3Y\}$\\
 \hline
$X^4Y$&$\{1, X, X^2, X^3, X^4, Y, XY, X^2Y, X^3Y, X^4Y\} \cup \{Y^2\}$\\
\hline
$X^5Y$& $ \{1, X, X^2,X^3,X^4, X^5,Y,XY,X^2Y,X^3Y,X^4Y,X^5Y\} \cup \{ Y^2, XY^2\}$\\
\hline
$X^6Y$ & $ \{ 1, X, X^2,X^3,X^4,X^5,X^6,Y,XY,X^2Y,X^3Y,X^4Y,X^5Y,X^6Y\} $\\
&$\cup \{ Y^2,XY^2,X^2Y^2\}$\\
\hline
\end{tabular}
\end{center}
\end{table}
\begin{figure}
$$
\begin{array}{cccccccc}
M_{7}&M_{10}&M_{13}&M_{16}&M_{19}&M_{21}&M_{22}& \\
M_3&M_5&M_8&M_{11}&M_{14}&M_{17}&M_{20}&\\
M_1&M_2&M_4&M_6&M_9&M_{12}&M_{15}&M_{18}\\
\ \\
Y^2&XY^2&X^2Y^2&X^3Y^2&X^4Y^2&X^5Y^2&X^6Y^2\\
Y&XY&X^2Y&X^3Y&X^4Y&X^5Y&X^6Y\\
1&X&X^2&X^3&X^4&X^5&X^6&X^7\\
\ \\
3&6&9&12&15&18&21\\
2&4&6&8&11&14&17\\
1&2&3&4&6&8&12&15\\
\end{array}
$$
\caption{The monomials $M_i$ and $\mu(M_i)$, where for $i=18$ we redefined $\mu$ to be the improved estimate on $d(C(i-1),C(i))$ from Example~\ref{exfino} in Section~\ref{sec4.5}.}
\label{figalldual}
\end{figure}
Applying the information in the lower part of Figure~\ref{figalldual} we establish code parameters of $\tilde{C}(\delta)$ as in Table~\ref{tabcodepardual}.
\begin{table}
\caption{Estimated parameters of the codes $\tilde{C}(\delta)$ as found in the present paper}
\label{tabcodepardual}
\begin{center}
\begin{tabular}{ccc}
$[22,21,2]_8$&$[22, 19,3]_8$&$[22, 17,4]_8$\\
$[22,15,6]_8$&$[22, 12,8]_8$&$[22, 10,9]_8$\\
$[22,9,11]_8$&$[22, 8,12]_8$&$[22, 6,14]_8$\\
$[22,5,15]_8$&$[22, 3,17]_8$&$[22, 2,18]_8$\\
$[22,1,21]_8$
\end{tabular}
\end{center}
\end{table}
For $9$ out of the $13$ codes mentioned in Table~\ref{tabcodepardual}, for the given dimension the designed minimum distance equals the best value known to exist according to~\cite{grassl}. For the remaining dimensions ($10$, $3$, $2$ and $1$) the value in \cite{grassl} exceeds ours by one.

Regarding~\cite{kolluru-feng-rao}[ Ex.\ 3.2, Ex.\ 4.1] a comparison between their results and ours can be taken as a direct measure of how well their method and our method compete. In Table~\ref{tabkfr} we list parameters as can be derived using their findings. Here, entries in bold come from \cite{kolluru-feng-rao}[Ex.\ 4.1] whereas the remaining cases come from \cite{kolluru-feng-rao}[Ex.\ 3.2]. It is evident that we always find at least as sharp estimates than they do, and that we do significantly better in a considerable amount of cases. For instance we have $[22,9,11]_8$ and $[22,8,12]_8$ codes whereas they produce $[22,9,9]_8$ and $[22,7,10]_8$. Similarly our $[22,5,15]_8$ compares favorable with their $[22,3,15]_8$. In Table~\ref{tabkfr} we do  not include the parameters $[22,16,5]_8$ which in \cite{kolluru-feng-rao}[Tab.\ 2] is claimed to be demonstrated in their Example 4.1. However, there is no treatment of $d=5$ in that example, and it does not seem possible to produce such parameters using their method.
\begin{table}
\caption{Estimated parameters as found in~\cite{kolluru-feng-rao}[Ex.\ 3.2, Ex.\ 4.1]}
\label{tabkfr}
\begin{center}
\begin{tabular}{ccc}
$[22,21,2]_8$&$[22,19,3]_8$&$[22,17,4]_8$\\
$[22,15,5]_8$&${\bm{[22,15,6]_8}}$&$[22,11,7]_8$\\
$[22,10,8]_8$&${\bm{[22,9,9]_8}}$&$[22,7,10]_8$\\
$[22,6,12]_8$&$[22,4,14]_8$&$[22,3,15]_8$\\
$[22,2,18]_8$&$[22,1,21]_8$
\end{tabular}
\end{center}
\end{table}

\section{From parity-check matrix to generator matrix (and vice versa)}\label{sec5}

Having in the previous sections established results on primary affine variety codes and dual affine variety codes it is a natural question to ask how these two constructions relate to each other. In the present section we answer this question by establishing affine variety descriptions of generator matrices for the dual affine variety codes. Of course this immediately translates to a result in the reverse direction producing similar descriptions of parity-check matrices for primary affine variety codes. 

In full generality, given an affine variety ${\mathbb{V}}(I_q)$ of size $n$ let $\{ F_1+I_q, \ldots , F_n+I_q\}$ be a basis for ${\mathbb{F}}_q[X_1, \ldots , X_m]/I_q$, where we may assume that the support of each of the polynomials $F_i$ is a subset of $\Delta_\prec(I_q)$. We describe a procedure to determine another basis for ${\mathbb{F}}_q[X_1, \ldots , X_m]/I_q$, with the representatives again having support in $\Delta_\prec(I_q)$, from which one can immediately for arbitrary 
\begin{equation}
\big( {\mbox{Span}}_{\mathbb{F}_q}\{{\mbox{ev}}(F_{i_1}) ,\ldots , {\mbox{ev}}(F_{i_{n-k}})\}\big)^\perp \label{eqgendual}
\end{equation}
 read of a primary description.
The method 
involves simple Gaussian elimination and Lagrange interpolation adapted to the affine variety ${\mathbb{V}}(I_q)$. It is universal in the sense that it simultaneously solves the problem for all choice of index $\{i_1, \ldots , i_{n-k}\} \subseteq \{1, \ldots , n\}$. In particular the method simultaneously returns affine variety descriptions of a generator matrix for $C^\perp(I,L)$ for all choices of $L \subseteq \Delta_\prec(I_q)$, which of course then implies affine variety descriptions of a parity-check matrix for all $C(I,L)$). This is in contrast to the method in~\cite{MR4272610} where calculations have to be repeated for every particular affine variety code.

We start by observing that given a basis ${\mathcal{B}}=\{\vec{b}_1, \ldots , \vec{b}_n\}$ for ${\mathbb{F}}_q^n$ as a vector space over ${\mathbb{F}}_q$ we can find another basis ${\mathcal{B}}^\perp=\{ \vec{b}_1^\perp, \ldots , \vec{b}_n^\perp \}$ such that for any set of $n-k$ pairwise different elements $\{i_1, \ldots , i_{n-k}\} \subseteq \{1, \ldots , n\}$ it holds that
\begin{equation}
\big({\mbox{Span}}_{\mathbb{F}_q}\{\vec{b}_{i_1}, \ldots , \vec{b}_{i_{n-k}}\} \big)^\perp={\mbox{Span}}_{\mathbb{F}_q}\{\vec{b}_j^\perp \mid j \in \{1, \ldots , n\} \backslash \{i_1, \ldots , i_{n-k}\}\} \label{eqsnabel1}
\end{equation}
(and vice versa). The proof of the following theorem establishing such a basis is straightforward.

\begin{theorem}\label{thefromgentopar}
Given a basis ${\mathcal{B}}$ of row vectors as above write
$$B=\left[ \begin{array}{c}
\vec{b}_1 \\
\vec{b}_2 \\
\vdots \\
\vec{b}_n
\end{array}
\right]
$$
then
$$B^{-1}=\left[ \big( \vec{b}_1^\perp \big)^T,  \big( \vec{b}_2^\perp \big)^T,\ldots , \big( \vec{b}_n^\perp \big)^T \right] $$
where  $\vec{b}_j^\perp$, $j=1, \ldots , n$ are as in~(\ref{eqsnabel1}).
\end{theorem}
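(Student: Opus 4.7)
The plan is to start from the basis $\{\vec{b}_j^\perp\}_{j=1}^n$ as characterized by (\ref{eqsnabel1}) and to verify, after a suitable normalization, that the matrix $M$ whose columns are $(\vec{b}_j^\perp)^T$ satisfies $BM = I$, hence $M = B^{-1}$. Since the condition (\ref{eqsnabel1}) determines each $\vec{b}_j^\perp$ only up to a nonzero scalar (the right-hand side is a one-dimensional space after intersecting over suitable index sets), the normalization step is what pins down the scaling asserted in the theorem.

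First I would specialize (\ref{eqsnabel1}) to the case of singleton index sets. Taking $\{i_1,\ldots,i_{n-k}\}=\{i\}$ (so $k=n-1$) yields
\begin{equation*}
\big(\mathrm{Span}_{\mathbb{F}_q}\{\vec{b}_i\}\big)^\perp=\mathrm{Span}_{\mathbb{F}_q}\{\vec{b}_j^\perp \mid j\in\{1,\ldots,n\}\setminus\{i\}\}.
\end{equation*}
In particular, $\vec{b}_i\cdot\vec{b}_j^\perp=0$ for every pair $i\neq j$, which gives the required off-diagonal vanishing in the product $BM$.

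Next I would argue that each diagonal inner product is nonzero. Since $\{\vec{b}_j^\perp\}_{j=1}^n$ is a basis of $\mathbb{F}_q^n$, in particular $\vec{b}_i^\perp\neq \vec{0}$. If $\vec{b}_i\cdot\vec{b}_i^\perp=0$ held, then combined with the off-diagonal vanishing we would have $\vec{b}_j\cdot\vec{b}_i^\perp=0$ for \emph{every} $j$; but $\{\vec{b}_1,\ldots,\vec{b}_n\}$ spans $\mathbb{F}_q^n$, which would force $\vec{b}_i^\perp=\vec{0}$, a contradiction. Hence $\vec{b}_i\cdot\vec{b}_i^\perp\neq 0$, and I can rescale $\vec{b}_i^\perp$ by $(\vec{b}_i\cdot\vec{b}_i^\perp)^{-1}$. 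Crucially, rescaling a single $\vec{b}_i^\perp$ by a nonzero scalar does not change the spans appearing on the right-hand side of (\ref{eqsnabel1}) for any index set, so the rescaled vectors still satisfy the defining property.

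After this normalization we have $\vec{b}_i\cdot\vec{b}_j^\perp=\delta_{ij}$ for all $i,j$. The $(i,j)$-entry of $BM$ is exactly $\vec{b}_i\cdot\vec{b}_j^\perp$, so $BM=I_n$ and therefore $M=B^{-1}$, which is the stated formula. The main obstacle is really just conceptual rather than computational: being careful that (\ref{eqsnabel1}) only specifies the $\vec{b}_j^\perp$ up to scalar, and that the theorem should be read as asserting existence of a consistent scaling realized concretely by $B^{-1}$. Everything else — the off-diagonal orthogonality and the non-degeneracy of the diagonal pairing — is an immediate consequence of applying (\ref{eqsnabel1}) to singletons and of $\{\vec{b}_i\}$ being a basis.
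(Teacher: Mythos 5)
The paper itself gives no argument here (it simply cites \cite{agismm}), so I can only judge your proposal on its own terms. Your computations are all correct, but your argument runs in the wrong logical direction and therefore has a real gap. You \emph{presuppose} that a basis $\{\vec{b}_1^\perp,\ldots,\vec{b}_n^\perp\}$ satisfying~(\ref{eqsnabel1}) exists, and then show that, after normalizing, it must consist of the columns of $B^{-1}$. That is a uniqueness-up-to-scaling statement. But the substantive content of the theorem --- the reason it is stated at all, and the property Procedure~1 relies on --- is the \emph{existence} claim: that the columns of $B^{-1}$ actually satisfy~(\ref{eqsnabel1}) \emph{for every} index set $\{i_1,\ldots,i_{n-k}\}$ simultaneously. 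Your proof never verifies a single instance of~(\ref{eqsnabel1}) for the columns of $B^{-1}$; it only \emph{uses} the singleton instances of~(\ref{eqsnabel1}) as hypotheses. If existence were in doubt, your argument would be vacuous, and existence is exactly what needs proving.

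The repair is short and uses the same ingredients you already have, just pointed the other way. Define $\vec{b}_j^\perp$ by the displayed formula, so that $BB^{-1}=I$ gives the biorthogonality $\vec{b}_i\cdot\vec{b}_j^\perp=\delta_{ij}$. Fix any $S=\{i_1,\ldots,i_{n-k}\}$. Biorthogonality gives the inclusion
\begin{equation*}
{\mbox{Span}}_{\mathbb{F}_q}\{\vec{b}_j^\perp \mid j\notin S\}\subseteq \big({\mbox{Span}}_{\mathbb{F}_q}\{\vec{b}_i \mid i\in S\}\big)^\perp ,
\end{equation*}
and both sides have dimension $k$ (the left because the columns of an invertible matrix are linearly independent, the right because the dual of an $(n-k)$-dimensional subspace of $\mathbb{F}_q^n$ has dimension $k$ --- the bilinear form $\vec{x}\cdot\vec{y}$ being non-degenerate). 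Hence equality holds for every $S$, which is~(\ref{eqsnabel1}). Your observations about the scaling ambiguity in~(\ref{eqsnabel1}) and the non-vanishing of $\vec{b}_i\cdot\vec{b}_i^\perp$ are correct and would belong to a uniqueness remark, but they do not substitute for this existence argument.
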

\begin{proof}
See~\cite{agismm}.
\end{proof}

We next describe Lagrange interpolation over any affine variety $$V={\mathbb{V}}(I_q)=\{P_1, \ldots , P_n\} \subseteq {\mathbb{F}}_q^m.$$ Recall, that given a point $(a_1, \ldots , a_m) \in {\mathbb{F}}_q^m$ the Lagrange  basis polynomial
\begin{equation}
L_{(a_1,\ldots , a_m)}(X_1, \ldots , X_m)=\prod_{i=1}^m \prod_{s \in {\mathbb{F}}_q\backslash \{a_i\}} \frac{X_i-s}{a_i-s} \label{eqlagrange1}
\end{equation}
evaluates to $1$ in $(a_1, \ldots , a_m)$ and to $0$ in all other points of ${\mathbb{F}}_q^m$. Before continuing, for each $(a_1, \ldots ,  a_m) \in V$ we calculate the remainder of $L_{(a_1, \ldots  ,a_m)}$ modulo a Gr\"{o}bner basis for $I_q$ the result of which we denote $L_{(a_1, \ldots , a_m)}^{(V)}$. This new polynomial evaluates in the same way for all points in $V$, but possibly differently outside. Observe, that the support of the latter polynomial clearly is contained in $\Delta_\prec(I_q)$. Given $(c_1, \ldots , c_n)$ we then obtain the polynomial $G$ with support in $\Delta_\prec(I_q)$ and satisfying\footnote{We remark that in cases where for a given index $i$ there exists an $s \in {\mathbb{F}}_q$ such that for no $P_j$ the $i$th coordinate is $s$ we may leave out this value in the $i$th product of~(\ref{eqlagrange1}) simplifying the calculations. }
\begin{equation*}
G(P_i)=c_i, i=1, \ldots, n 
\end{equation*}
as follows
$$G=\sum_{i=1}^n c_iL_{P_i}^{(V)}.$$

We now combine the two mentioned results to solve the problem posed at the beginning of the section. 
\begin{procedure}
Consider a set of linearly independent polynomials $F_1, \ldots , F_{n=\#\Delta_\prec(I_q)}$ over $\mathbb{F}_q$ all with support in $\Delta_\prec(I_q)$. E.g.\ $\{F_1=M_1, \ldots , F_n=M_n\} = \Delta_\prec(I_q)$  where $M_i \prec M_j$ for $i<j$. We first calculate $\vec{b}_i={\mbox{ev}}(F_i)$ for $i=1, \ldots , n$. Then using Theorem~\ref{thefromgentopar} we determine $\vec{b}_1^\perp, \ldots , \vec{b}_n^\perp$ satisfying~(\ref{eqsnabel1}). Finally, for $i=1, \ldots , n$ we apply Lagrange interpolation over the variety ${\mathbb{V}}(I_q)$ to determine polynomials $F_i^\perp$ with support in $\Delta_\prec(I_q)$ such that ${\mbox{ev}}(F_i^\perp)=\vec{b}_i^\perp$. Then~(\ref{eqgendual}) equals ${\mbox{Span}}_{\mathbb{F}_q}\{ {\mbox{ev}}(F_j^\perp) \mid j \in \{ 1, \ldots , n\} \backslash \{i_1, \ldots , i_{n-k}\}\}$. In particular for any $L \subseteq \Delta_\prec(I_q)$ we have
$$C^\perp(I,L) = {\mbox{Span}}_{\mathbb{F}_q}\{ {\mbox{ev}}(F_i^\perp) \mid M_i \notin L\}$$
when $F_1=M_1, \ldots , F_n=M_n$.
\end{procedure}

For the Klein quartic the variety consists of the points in Table~\ref{tabpoints}. This information was used as input when implementing the procedure on a computer system to obtain the generating polynomials $F_1^\perp , \ldots , F_{22}^\perp$ of the dual basis as described in Table~\ref{tabgenone} for the case of the Klein quartic and the monomial ordering being $\prec_w$. It is a manageable task to check that indeed $\sum_{s=1}^{22}M_i (P_s) F_j^\perp(P_s)=\delta_{i,j}$, where $\delta_{i,j}$ denotes the Kronecker function. From Table~\ref{tabgenone} it is straight forward to devise generator matrices for the dual affine variety codes of Section~\ref{sec3} and similarly parity-check matrices for the primary affine variety codes in~\cite{geilozbudak}.
\begin{table}
\caption{The points of the Klein quartic. Here, $\alpha$ is a root of $T^3+T^2+1$.}
\label{tabpoints}
\begin{tabular}{lll}
$P_1=( 0,0 )$&$P_2=( \alpha,\alpha )$&$P_3=(\alpha ,\alpha^2+1 )$\\
$P_4=(\alpha , \alpha^2+\alpha+1)$&
$P_5=( \alpha^2,\alpha )$&$P_6=(\alpha^2, \alpha^2 )$\\
$P_7=(\alpha^2 ,\alpha^2+\alpha )$&$P_8=(\alpha^2+1 ,\alpha^2+\alpha+1 )$&
$P_{9}=(\alpha^2+1 ,\alpha^2+\alpha )$\\
$P_{10}=(\alpha^2+1 ,1 )$&$P_{11}=(\alpha^2+\alpha+1 ,\alpha^2 )$&$P_{12}=(\alpha^2+\alpha+1 ,\alpha^2+\alpha+1 )$\\
$P_{13}=(\alpha^2+\alpha+1 , \alpha+1)$&$P_{14}=( \alpha+1, \alpha^2)$&$P_{15}=(\alpha+1 ,\alpha^2+1 )$\\
$P_{16}=(\alpha+1 ,1 )$&
$P_{17}=( \alpha^2+\alpha, \alpha)$&$P_{18}=(\alpha^2 +\alpha,\alpha+1 )$\\$P_{19}=(\alpha^2+\alpha ,1 )$&$P_{20}=(1 , \alpha^2+1)$&
$P_{21}=(1 ,\alpha+1 )$\\$P_{22}=(1 , \alpha^2+\alpha)$
\end{tabular}
\end{table}
\begin{table}
\caption{The output from the procedure for the Klein quartic and $\prec_w$}
\label{tabgenone}
\begin{tabular}{llllll}
\hline $M_1=1 $&$M_2=X $&$M_3= Y$&$M_4=X^2 $&$M_5=XY $&$M_6=X^3 $\\
$M_7=Y^2 $&$M_8=X^2Y $&$M_9=X^4 $&$M_{10}=XY^2 $&$M_{11}=X^3Y $&$M_{12}=X^5 $\\
$M_{13}=X^2Y^2 $&$M_{14}=X^4Y $&$M_{15}= X^6$&$M_{16}=X^3Y^2 $&$M_{17}=X^5Y $&$M_{18}=X^7 $\\
$M_{19}= X^4Y^2$&$M_{20}=X^6Y $&$M_{21}=X^5Y^2 $&$M_{22}=X^6Y^2 $\\
\hline
$F_1^\perp=X^7+1 $&$F_2^\perp=X^6 $&$F_3^\perp= X^6Y^2$&$F_4^\perp=X^5 $&$F_5^\perp= X^5Y^2$&$F_6^\perp=X^4 $\\
$F_7^\perp= X^6Y$&$F_8^\perp= X^4Y^2$&$F_9^\perp=X^3 $&$F_{10}^\perp=X^5Y $&$F_{11}^\perp=X^3Y^2 $&$F_{12}^\perp=X^2 $\\
$F_{13}^\perp=X^4Y $&$F_{14}^\perp=X^2Y^2 $&$F_{15}^\perp= X$&$F_{16}^\perp=X^3Y $&$F_{17}^\perp=XY^2 $&$F_{18}^\perp=1 $\\
$F_{19}^\perp= X^2Y$&$F_{20}^\perp=Y^2 $&$F_{21}^\perp=XY $&$F_{22}^\perp=Y $\\
\hline
\end{tabular}
\end{table}
\section{Tightness of estimates for dual codes}\label{sec4.5}
The information from the previous section allows us to prove tightness of the bound from Section~\ref{sec3} in a substantial number of cases and to improve upon the estimate $\mu(M_{18})=14$ from Section~\ref{sec3} on the minimal weight of words in $C(17)\backslash C(18)$.
\begin{example}\label{exforimpure}
By definition, $\mu(M_i)$ is a lower bound on $d(C(i-1),C(i))$. We now show that $\mu (M_7)=3$ constitutes a sharp bound. From Table~\ref{tabgenone} we see that $F_7^\perp=X^6Y$ and that among $F_8^\perp, \ldots , F_{22}^\perp$ we have $X^5Y, X^4Y, X^3Y, X^2Y, XY$. Therefore for arbitrary  $b \in {\mathbb{F}}_8^\ast$ we get
\begin{equation}
\vec{c} ={\mbox{ev}}\big(Y \! \!   \prod_{a \in {\mathbb{F}}_8^\ast\backslash \{ b\}}(X-a) \big) \in C(6)\backslash C(7). \label{eqhalloejsa}
\end{equation}
Inspecting Table~\ref{tabpoints} we find that among the points $P_1, \ldots, P_{22}$ there are exactly three having $b$ as the first coordinate (the second coordinate of these three points as expected being different from $0$). Therefore $w_H(\vec{c})=3$, and $\mu(M_7)=3$ is sharp.
From the $\mu$-sequence in Figure~\ref{figalldual} we conclude that $d(C^\perp(I,L))=3$ whenever $\{M_1, M_2, M_3\} \subseteq L\subseteq \{M_1, \ldots , M_6\}$, and in particular that $d(C(3))=d(C(4))=d(C(5))=d(C(6))=3$.
\end{example}
We shall return to Example~\ref{exforimpure} in the next section where it shall support us in constructing so-called impure asymmetric quantum codes.
\begin{example}
Using exactly the same type of arguments as in Example~\ref{exforimpure}, but having a decreasing number of terms $(X-a)$ in~(\ref{eqhalloejsa}) one can show that $\mu(M_{10})$, $\mu(M_{13})$, $\mu(M_{16})$, $\mu(M_{19})$, $\mu(M_{21})$ and $\mu(M_{22})$ are all sharp estimates. Inspecting the $\mu$-sequence we conclude:  {\it{(i)}} for $\{M_1, \ldots , M_7\} \subseteq L \subseteq \{M_1, \ldots , M_9\}$ it holds that $d(C^\perp(I,L))=\mu(M_{10})=6$ and in particular that $d(C(7))=d(C(8))=d(C(9))=6$, {\it{(ii)}}
 $d(C(12))=\mu(M_{13})=9$, {\it{(iii)}}
$d(C(14))=d(C(15))=\mu(M_{16})=12$, {\it{(iv)}}
$d(C(18))=\mu(M_{19})=15$, {\it{(v)}}
$d(C(20))=\mu(M_{21})=18$, and finally {\it{(vi)}} $d(C(21))=\mu(M_{22})=21$.
\end{example}
\begin{example}
In this example we show that $\mu(M_{11})$, $\mu(M_{14})$, $\mu(M_{17})$ and $\mu(M_{20})$ are all sharp. Write $\{ a_1, a_2,a_3\}=\{\alpha,\alpha^2+\alpha,\alpha^2+\alpha +1\}$ where $\alpha$ is a root of $T^3+T^2+1$. The polynomial
\begin{equation}
F=Y(X+Y+\alpha)\prod_{i=1}^3(X-a_i) \label{eqhiphurrah}
\end{equation}
has $14$ roots in ${\mathbb{V}}(I_8)$, namely from the first factor $P_1$, from the second factor $P_7$, $P_8$, $P_{16}$, $P_{21}$, and from the last factor $3$ times $3$ equals $9$ points none of which are the same as the previous mentioned, due to the way $\{a_1, a_2, a_3\}$ has been chosen. Hence, $w_H(\vec{c}={\mbox{ev}}(F))=22-14=8$. Observe that $\mu(M_{11})$ also equals $8$ and from Table~\ref{tabgenone} that $F_{11}^\perp \in {\mbox{Supp}}(F)$ and that all other monomials in the support belong to $\{F_{12}^\perp, \ldots , F_{22}^\perp\}$. Therefore $\vec{c}\in C(10) \backslash C(11)$ which means that $\mu(M_{11})$ is sharp. By replacing the three linear factors in the last factor of~(\ref{eqhiphurrah}) with only $2$, $1$ and $0$, respectively, we can show that also $\mu(M_{14})$, $\mu(M_{17})$ and $\mu(M_{20})$, respectively, are sharp. In particular $d(C(10))=8$, $d(C(13))=11$, $d(C(16))=14$, and $d(C(19))=17$.
\end{example}
\begin{example}
In this example we show that $\mu(M_{15})=12$ is a sharp estimate. The polynomial
\begin{eqnarray}
F&=&\alpha X^3Y+\alpha XY^2+(\alpha^2+\alpha+1)Y^2+(\alpha^2+\alpha +1)XY+(\alpha +1)Y+X \nonumber
\end{eqnarray}
has $F_{15}^\perp$ in its support and all other monomials in the support belong to $\{F_{16}^\perp , \ldots , F_{22}^\perp\}$ which implies $\vec{c}={\mbox{ev}}(F) \in C(14) \backslash C(15)$. By inspection $F$ has $22-10=12$ non-roots in ${\mathbb{V}}(I_8)$.
\end{example}
\begin{example}\label{exfino}
From Table~\ref{tab9} in Section~\ref{sec3} we have the estimate $\mu(M_{18})=14$ for the minimal Hamming weight of a word in $C(17)\backslash C(18)$. However, the true minimal value equals $15$ as we now demonstrate. From Table~\ref{tabgenone} we know that a word as above can be written $\vec{c}={\mbox{ev}}(F)$ where 
$$1 \in {\mbox{Supp}}(F) \subseteq \{1, X^2Y, Y^2,XY,Y\}$$
hence, without loss of generality we write $F=f_1X^2Y+f_2Y^2+f_3XY+f_4Y+1$. Considering first the case $f_1=0$, but $f_2 \neq 0$, from line 4 of Table~\ref{tab3} we see $w_H(\vec{c}) \geq \# \langle \langle Y^2, X^3 \rangle \rangle =16$. Next we consider $f_1=f_2=0$, but $f_3 \neq 0$. Then from line 4 of Table~\ref{tab2} we have $w_H(\vec{c}) \geq \# \langle \langle XY, Y^2, X^5\rangle \rangle =16$. Now consider $f_1=f_2=f_3=0$, but $f_4 \neq 0$. In this case the second line of Table~\ref{tab1} tells us $w_H(\vec{c})\geq \# \langle \langle Y, X^3 \rangle \rangle =19$. For the case $f_1=f_2=f_3=f_4=0$, obviously $w_H(\vec{c})=22$. What remains is to consider the case $f_1 \neq 0$ which we now embarg on. From Table~\ref{tab4} we do have some information on $\Box_{\prec_w}(F)$ in that case, but we need additional analysis to arrive at our conclusion. Scaling $F$ by a non-zero constant we shall in the following rewrite it as
$F=X^2Y+bY^2+cXY+dY+a$, $a\neq 0$. Applying the method described in Section~\ref{sec2} we now seek for an expression of the form~(\ref{eqgrpol}) that will allow us to add an extra monomial to $\Box_{\prec_w}(F)$. We have
$$Y^2F=X^2Y^3+bY^4+cXY^3+dY^3+aY^2$$ 
from which we subtract 
$(X^2+bY+cX+d)(Y^3+X^3Y+X)$ to obtain
$$X^5Y+X^3+bX^3Y^2+bXY+cX^4Y+cX^2+dX^3Y+dX+aY^2.$$
From this we subtract $X^3F$ and end up with
$$(a+1)X^3+bXY+cX^2+dX+aY^2$$
which has $Y^2$ as leading monomial. Combining this information with line 4 of Table~\ref{tab4} we see that 
$$\langle \langle Y^2,X^2Y,X^5 \rangle \rangle \subseteq \Box_{\prec_w}(F)$$
holds, the set on the left-hand-side being of size $15$. We have considered all possible polynomials $F$ such that $\vec{c}={\mbox{ev}}(F) \in C(17)\backslash C(18)$ and can conclude $w_H(\vec{c})\geq 15$. Finally we show sharpness of our bound. Consider namely $F=\alpha X^2Y+1$. Then $F$ is of the prescribed form and by inspection it has exactly $22-7=15$ non-roots in ${\mathbb{V}}(I_8)$.
\end{example}

\section{Asymmetric quantum codes from the Klein quartic}\label{sec4}
Having in the previous sections treated classical linear codes, we in this section give some examples of good asymmetric quantum codes derived from nested codes over the Klein quartic under $\prec_w$. We start with a brief introduction to the concept.

A linear $q$-ary asymmetric quantum code is a $q^k$-dimensional subspace of the Hilbert space ${\mathbb{C}}^{q^n}$ which enables that $d_Z-1$ phase-shift errors as well as  $d_X-1$ qudit-flip errors can be detected and consequently that $\lfloor \frac{d_Z-1}{2}\rfloor$ phase-shift errors as well as  $\lfloor \frac{d_X-1}{2} \rfloor$ qudit-flip errors can be corrected~\cite{aq6}. The usual notation for this is $[[n,k,d_Z/d_X]]_q$. An important way of constructing such a code is through the use of the CSS-construction~\cite{aq7,aq6} which we recall in the theorem below.  In the following, to avoid confusion with the dimensions of the involved classical codes we use the symbol $\ell$ for the dimension of the asymmetric quantum code.

\begin{theorem}\label{thecss}
Let $C_2 \subsetneq C_1 \subseteq {\mathbb{F}}_q^n$ be linear codes with $\ell=\dim C_1 - \dim C_2$. Applying the CSS construction one obtains an asymmetric quantum code with parameters 
$[[n,\ell, d_Z/d_X]]_q$ where $d_Z=d(C_1,C_2)\geq d(C_1)$ and $d_X=d(C_2^\perp, C_1^\perp)\geq d(C_2^\perp)$.
\end{theorem}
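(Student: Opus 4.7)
The plan is to split the argument into two components: first, invoke the asymmetric CSS construction from the cited references~\cite{aq7,aq6} to obtain the existence of the quantum code and the identification of its relevant distance parameters with the relative distances of the nested classical codes; second, derive the two stated inequalities from elementary set-theoretic considerations on the definition of relative distance.

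For the first component, the CSS construction applied to the pair $C_2 \subsetneq C_1$ produces a stabilizer-type asymmetric quantum code on $n$ qudits of dimension $q^{\dim C_1 - \dim C_2} = q^\ell$. The key structural fact, which I would simply cite, is that the minimum weight of an undetectable phase-shift error equals $\min\{w_H(\vec{c}) \mid \vec{c} \in C_1 \setminus C_2\}$, while the minimum weight of an undetectable qudit-flip error equals $\min\{w_H(\vec{c}) \mid \vec{c} \in C_2^\perp \setminus C_1^\perp\}$. By the definition of relative distance recalled in Section~\ref{sec2}, these two quantities are precisely $d(C_1,C_2)$ and $d(C_2^\perp,C_1^\perp)$, so setting $d_Z = d(C_1,C_2)$ and $d_X = d(C_2^\perp,C_1^\perp)$ yields the parameters $[[n,\ell,d_Z/d_X]]_q$.

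For the second component, I would argue that since $\vec{0} \in C_2$, we have the inclusion $C_1 \setminus C_2 \subseteq C_1 \setminus \{\vec{0}\}$, and therefore
$$d(C_1,C_2) = \min\{w_H(\vec{c}) \mid \vec{c} \in C_1 \setminus C_2\} \geq \min\{w_H(\vec{c}) \mid \vec{c} \in C_1 \setminus \{\vec{0}\}\} = d(C_1),$$
giving the bound $d_Z \geq d(C_1)$. The identical argument, applied now to the pair $C_1^\perp \subsetneq C_2^\perp$, yields $C_2^\perp \setminus C_1^\perp \subseteq C_2^\perp \setminus \{\vec{0}\}$ and hence $d_X = d(C_2^\perp, C_1^\perp) \geq d(C_2^\perp)$.

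The only genuine obstacle here is the quantum-mechanical content of the CSS construction itself, which is why I would treat it as black-box input from \cite{aq7,aq6}; the remaining verification, consisting of the two inclusion-based inequalities, is entirely combinatorial and requires no further work beyond unwinding the definition of relative distance.
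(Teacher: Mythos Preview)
Your proposal is correct and matches the paper's treatment: the paper does not supply its own proof of this theorem but simply recalls it as a cited result from~\cite{aq7,aq6}, and your plan does exactly that for the quantum-mechanical part while additionally spelling out the elementary inclusion argument behind the two inequalities $d(C_1,C_2)\geq d(C_1)$ and $d(C_2^\perp,C_1^\perp)\geq d(C_2^\perp)$. There is nothing to add or amend.
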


An asymmetric quantum code from the CSS construction is called pure when both $d(C_1,C_2)=d(C_1)$ and $d(C_2^\perp , C_1^\perp)=d(C_2^\perp)$ holds true and otherwise impure or degenerate. Impure codes can be desirable due to their ability to support passive error-correction~\cite{4036136,ezerman} and in connection with hybrid codes for simultaneous transmission of quantum and classical information~\cite{8006823}, but it is considered to be a difficult research problem to establish examples of them. The machinery of the present paper, however, allows us to present at the end of this section a couple of new examples. 

Observe that in Theorem~\ref{thecss} the existence of an $[[n,\ell,d^{\prime}/d^{\prime \prime}]]_q$ code is equivalent to the existence of an $[[n,\ell, d^{\prime \prime}/d^\prime]]_q$ code, which is seen by replacing the nested codes with their duals. Therefore if for instance we want $d_Z \geq d_X$ (which can be desirable~\cite{aq7,aq6}) we may consider a case where the relative distance between two primary codes is strictly smaller than the relative distance between their duals as long as we apply the mentioned correspondence at the end. In particular if we from two sets of nested codes conclude the existence of quantum codes with parameters $[[n,\ell,a/b]]_q$ and $[[n,\ell,c/a]]_q$, respectively, then if $c>b$ we shall conclude the existence of a code with parameters $[[n,\ell ,a/c]]_q$ and similarly the existence of a code with parameters $[[n,\ell,b/a]]_q$ when $b >c$.  In the remaining part of this paper when writing $[[n,\ell,d_Z/d_X]]_q$ we mean a code of length $n$, dimension $\ell$, but where the listed values $d_Z$ and $d_X$ are established lower bounds on the relative distances. Of course consulting Section~\ref{sec4.5} we may in some cases conclude that our estimates are sharp,  but we shall not pursue this, except in a few cases. We shall restrict to listing parameters where $d_Z \geq d_X$. 

To demonstrate the advantage of the codes constructed in this section we make two comparisons. Firstly, we compare with the two Gilbert-Varshamov-type existence bounds for asymmetric quantum codes in~\cite{matsumoto2017}[Th.\ 2, Th.\ 4]. As it turns out {\it{none}} of the parameters established in the present section can be foreseen from~\cite{matsumoto2017}. In fact, in most cases we do far better than what is promised by~\cite{matsumoto2017}. We shall not comment further on this, but leave it for the reader to check the details. The second comparison we make follows a tradition started in~\cite{Ezerman2015}[Th.\ 2]. Here, given estimated parameters $[[n,\ell,d_Z/d_X]]_q$ it is being checked from the existence results on linear codes in~\cite{grassl} what one could {\it{possibly}} hope to obtain from the CSS construction using information on the minimum distances of two codes under the {\it{unsubstantiated}}  assumption that the dual of the one is contained in the other. More precisely, we calculate the corresponding numbers $g_1=\delta_Z-d_Z$ and $g_2=\delta_X-d_X$ where $\delta_Z$ and $\delta_X$ are given as follows. To establish $\delta_Z$, and thereby $g_1$ we inspect~\cite{grassl} to see what is  the largest dimension $k_1$ for which a code of length $n$ and minimum distance $d_X$ is guaranteed to exist. Then we define $k_2^\perp=n-(k_1-\ell)$. Finally we inspect~\cite{grassl} once again to see what is the best known minimum distance $\delta_Z$ to exists for a code of dimension $k_2^\perp$
 and being of length $n$. Should the two codes from~\cite{grassl} happen to satisfy that the dual of the one is contained in the other, which we do not have any a priori evidence that is true, then the CSS construction would give us an asymmetric quantum code with estimated parameters $[[n,\ell,\delta_Z/d_X]]_q$. We define $\delta_X$, and thereby $g_2$, in a similar fashion and make similar conclusions regarding the possible existence of a quantum code with parameters $[[n,\ell,d_Z/\delta_X]]_q$.  In conclusion, if for given parameters of an asymmetric quantum code $g_1$ and $g_2$ are not too much larger than $0$ then the parameters of our code can be considered to be good. 

To establish code parameters we use Table~\ref{tabmusigma} in which we collect and reorganize the information from Figure~\ref{figallprime} and Figure~\ref{figalldual}. 
\begin{table}
\caption{Collected information on $\sigma$ and $\mu$}
\begin{center}
\begin{tabular}{c|ccccccccccc}
$i$&1&2&3&4&5&6&7&8&9&10&11\\
\hline
$\sigma(M_i)$&22&19&18&16&15&13&13&12&10&10&9\\
$\mu(M_i)$&1&2&2&3&4&4&3&6&6&6&8\\
\ \\
$i$&12&13&14&15&16&17&18&19&20&21&22\\
\hline
$\sigma(M_i)$&7&7&6&4&5&4&1&3&2&2&1\\
$\mu(M_i)$&8&9&11&12&12&14&15&15&17&18&21
\end{tabular}
\end{center}
\label{tabmusigma}
\end{table}
In Table~\ref{taballasymm} we collect information on $17$ good asymmetric quantum codes, some of which we treat in detail in the following examples.
\begin{table}
\caption{Asymmetric quantum codes over ${\mathbb{F}}_8$ of length $n=22$ coming from the Klein quartic under $\prec_w$}
\begin{center}
\begin{tabular}{ccccccc}
$\ell$&$d_Z$&$d_X$&$C_1$&$C_2$&$g_1$&$g_2$\\
\hline
$1$&$22$&$1$&$E(1)$&$\{ \vec{0} \}$&$0$&$0$\\
$1$&$19$&$2$&$E(2)$&$E(1)$&$0$&$0$\\
$1$ & $15$ & $4$ & $E(5)$ & $E(4)$ & $0$ & $0$\\
$1 $ & $ 9$ & $ 8$ & $E(11) $ & $E(10) $ & $ 0$ & $0 $\\
$2 $ & $ 15$ & $ 3$ & $ E(5)$ & $ E(3)$ & $ 0$ & $0$\\
$2 $ & $ 11$ & $5 $ & $ C(13)$ & $(\tilde{E}(5))^\perp $ & $ 1$ & $1 $\\
$ 3$ & $12 $ & $ 4$ & $ E(8)$ & $ (\tilde{C}(4))^\perp$ & $ 1$ & $ 0$\\
$ 3$ & $ 10$ & $ 6$ & $E(10) $ & $ E(7)$ & $0 $ & $0 $\\
$ 3$ & $ 8$ & $7 $ & $C(10) $ & $C(13) $ & $1 $ & $0 $\\
$4 $ & $13 $ & $3 $ & $E(7) $ & $E(3) $ & $0 $ & $0 $\\
$5 $ & $10 $ & $4 $ & $E(10) $ & $(\tilde{C}(4))^\perp $ & $1 $ & $1 $\\
$7 $ & $ 6$ & $6 $ & $E(14) $ & $ E(7)$ & $1 $ & $1 $\\
$11 $ & $6 $ & $3 $ & $E(14) $ & $E(3) $ & $1 $ & $1 $\\
$12 $ & $ 4$ & $4 $ & $E(17) $ & $(\tilde{C}(4))^\perp $ & $ 1$ & $1 $\\
$14 $ & $4 $ & $ 3$ & $E(17) $ & $E(3) $ & $0 $ & $1 $\\
$15 $ & $3 $ & $ 3$ & $\tilde{E}(3) $ & $E(3) $ & $1 $ & $ 1$\\
$19 $ & $2 $ & $ 2$ & $\tilde{E}(2) $ & $E(1) $ & $0 $ & $0 $
\end{tabular}
\end{center}
\label{taballasymm}
\end{table}
\begin{example}\label{exzero}
Consider $C_2=E(3)$ and $C_1=\tilde{E}(3)$. Then $C_2=C(I,L_2)$ and $C_1=C(I,L_1)$ with $L_2=\{M_1, M_2, M_3\}$ and $L_1=\{M_1,\ldots , M_{17}, M_{19}\}$. Clearly, $L_2 \subsetneq L_1$ ensuring the needed inclusion of codes. We have $\# L_2=3$, $\# L_1=18$ and consequently $\ell = 15$. Further 
$$d(C_1,C_2)\geq \min \{ \sigma (M_4), \ldots , \sigma(M_{17}), \sigma (M_{19})\} = 3$$
$$d(C_2^\perp,C_1^\perp) \geq \min \{\mu(M_4), \ldots , \mu(M_{19})\}=3$$
giving us the parameters $[[22,15,3/3]]_8$ which can be considered good as $g_1=g_2=1$. 
\end{example}
\begin{example}\label{exfirst}
Consider $C_2=(\tilde{C}(4))^\perp$ and $C_1=\tilde{E}(3)$. Then $C_2=C(I,L_2)$ and $C_1=C(I,L_1)$ with $L_2=\{M_1, M_2,M_3,M_4,M_7\}$ and $L_1=\{ M_1,  \ldots , M_{17},M_{19}\}$. By inspection our estimate on $d(C_1,C_2)$ equals the designed distance of $C_1$ which is $3$ and similar for the duals except that the value here is $4$. Replacing $C_1$ with $C_2^\perp$ and vice versa we therefore obtain parameters $[[22,13,4/3]]_8$. However, if instead we choose $C_2=E(3)=C(I,L_2)$ and $C_1=E(17)=C(I, L_1)$ then $L_2=\{M_1, M_2, M_3\}$ is contained in $L_1=\{M_1, \ldots , M_{17}\}$ from which we obtain the better parameters $[[22,14,4/3]]_8$ with corresponding values $g_1=0$ and $g_2=1$. 
\end{example}
In the remaing part of the paper we concentrate on impure codes.
\begin{example}
Let $C_2=E(4)$ and $C_1=E(5)$. Then $d(C_1,C_2)\geq \sigma(M_5)=15$ and $d(C_2^\perp, C_1^\perp)\geq \mu (M_5)=4$ giving us the parameters $[[22,1,15/4]]_8$. However, from Example~\ref{exforimpure} in the previous section we conclude that $d(C_2^\perp)=3$ and therefore the asymmetric quantum code is impure.
\end{example}

\begin{example}
We mention a couple of other examples of impure codes the parameters of which, unfortunately, are not among the best possible of the codes in this section. First let $C_2=C(19)$ and $C_1=C(18)$. Then $d(C_1,C_2)\geq 15$ and $d(C_2^\perp, C_1^\perp)\geq 3$ giving us the parameters $[[22,1,15/3]]_8$. However, $\vec{c}={\mbox{ev}}(X^7-1) \in E(19)=C_2^\perp $ and this word is of Hamming weight only $1$ (see Table~\ref{tabpoints}), meaning that $d(C_2^\perp)=1$. Other examples are $[[22,1,12/5]]_8$ with $d(C_2^\perp)=4$, $[[22,2,17/2]]_8$ with $d(C_2^\perp) =1$, and finally $[[22,2,13/4]]_8$ with $d(C_2^\perp)=3$. 
\end{example}

\section*{Acknowledgment}
The author is grateful to Diego Ruano and Ryutaroh Matsumoto for many fruitful discussions, also in connection with the present paper.


\begin{thebibliography}{10}

\bibitem{4036136}
S.~A. Aly, A.~Klappenecker, and P.~K. Sarvepalli.
\newblock Remarkable degenerate quantum stabilizer codes derived from duadic
  codes.
\newblock In {\em 2006 IEEE International Symposium on Information Theory},
  pages 1105--1108, 2006.


\bibitem{MR2332494}
P.~Beelen.
\newblock The order bound for general algebraic geometric codes.
\newblock {\em Finite Fields Appl.}, 13(3):665--680, 2007.

\bibitem{MR3781399}
P.~Beelen and M.~Datta.
\newblock Generalized {H}amming weights of affine {C}artesian codes.
\newblock {\em Finite Fields Appl.}, 51:130--145, 2018.

\bibitem{MR3804810}
C.~Carvalho, M.~Chara, and L.~Quoos.
\newblock On evaluation codes coming from a tower of function fields.
\newblock {\em J. Symbolic Comput.}, 89:121--128, 2018.

\bibitem{clo}
D.~A. Cox, J.~Little, and D.~O'Shea.
\newblock {\em Ideals, varieties, and algorithms: an introduction to
  computational algebraic geometry and commutative algebra}, volume~10.
\newblock Springer, 1997.

\bibitem{ezerman}
M.~F.~Ezerman.
\newblock Quantum error-control codes.
\newblock {\em Arxiv:2009.05735}, 2020.

\bibitem{Ezerman2015}
M.~F.~Ezerman, S.~Jitman, and P.~Sol{\'e}.
\newblock {X}ing--{L}ing codes, duals of their subcodes, and good asymmetric
  quantum codes.
\newblock {\em Des. Codes Cryptogr.}, 75(1):21--42, 2015.

\bibitem{FR24}
G.~L. Feng and T.~R.~N. Rao.
\newblock Decoding algebraic-geometric codes up to the designed minimum
  distance.
\newblock {\em IEEE Trans. Inform. Theory}, 39(1):37--45, 1993.

\bibitem{FR1}
G.~L. Feng and T.~R.~N. Rao.
\newblock A simple approach for construction of algebraic-geometric codes from
  affine plane curves.
\newblock {\em IEEE Trans. Inform. Theory}, 40(4):1003--1012, 1994.

\bibitem{FR2}
G.~L. Feng and T.~R.~N. Rao.
\newblock Improved geometric {G}oppa codes part {I}: Basic theory.
\newblock {\em IEEE Trans. Inform. Theory}, 41(6):1678--1693, 1995.

\bibitem{lax}
J.~Fitzgerald and R.~F. Lax.
\newblock Decoding affine variety codes using {G}r\"obner bases.
\newblock {\em Des. Codes Cryptogr.}, 13(2):147--158, 1998.

\bibitem{MR4123876}
I.~Garc\'{\i}a-Marco, I.~M\'{a}rquez-Corbella, and D.~Ruano.
\newblock High dimensional affine codes whose square has a designed minimum
  distance.
\newblock {\em Des. Codes Cryptogr.}, 88(8):1653--1672, 2020.

\bibitem{GeilEvaluationCodes}
O.~Geil.
\newblock Evaluation codes from an affine variety code perspective.
\newblock In {\em Advances in algebraic geometry codes}, volume~5 of {\em Ser.
  Coding Theory Cryptol.}, pages 153--180. World Sci. Publ., Hackensack, NJ,
  2008.

\bibitem{geilmartin2013further}
O.~Geil and S.~Martin.
\newblock Further improvements on the {F}eng-{R}ao bound for dual codes.
\newblock {\em Finite Fields Appl.}, 30:33--48, 2014.

\bibitem{impprim}
O.~Geil and S.~Martin.
\newblock An improvement of the {F}eng-{R}ao bound for primary codes.
\newblock {\em Des. Codes Cryptogr.}, 76(1):49--79, 2015.

\bibitem{agismm}
O.~Geil, R.~Matsumoto, and D.~Ruano.
\newblock {F}eng-{R}ao decoding of primary codes.
\newblock {\em Finite Fields Appl.}, 23:35--52, 2013.

\bibitem{geilozbudak}
O.~Geil and F.~\"{O}zbudak.
\newblock On affine variety codes from the {K}lein quartic.
\newblock {\em Cryptogr. Commun.}, 11(2):237--257, 2019.

\bibitem{grassl}
M.~Grassl.
\newblock {Bounds on the minimum distance of linear codes and quantum codes}.
\newblock Online available at \url{http://www.codetables.de}, Jun. 2018.
\newblock Accessed on 2021-07-15.

\bibitem{8006823}
M.~Grassl, S.~Lu, and B.~Zeng.
\newblock Codes for simultaneous transmission of quantum and classical
  information.
\newblock In {\em 2017 IEEE International Symposium on Information Theory
  (ISIT)}, pages 1718--1722, 2017.

\bibitem{onorin}
T.~H{\o}holdt.
\newblock On (or in) {D}ick {B}lahut's' footprint'.
\newblock {\em Codes, Curves and Signals}, pages 3--9, 1998.

\bibitem{handbook}
T.~H{\o}holdt, J.~H. van Lint, and R.~Pellikaan.
\newblock Algebraic geometry codes.
\newblock In V.~S. Pless and W.~C. Huffman, editors, {\em Handbook of Coding
  Theory}, volume~1, pages 871--961. Elsevier, Amsterdam, 1998.

\bibitem{aq7}
L.~Ioffe and M.~M{\'e}zard.
\newblock Asymmetric quantum error-correcting codes.
\newblock {\em Phys. Rev. A}, 75(3):032345, 2007.

\bibitem{kolluru-feng-rao}
M.~S. Kolluru, G.~L. Feng, and T.~R.~N. Rao.
\newblock Construction of improved geometric {G}oppa codes om {K}lein curves
  and {K}lein-like curves.
\newblock {\em Appl. Algebra Engrg. Comm. Comput.}, 10(6):433--464, 2000.

\bibitem{MR4272610}
H.~H. L\'{o}pez, I.~Soprunov, and R.~H. Villarreal.
\newblock The dual of an evaluation code.
\newblock {\em Des. Codes Cryptogr.}, 89(7):1367--1403, 2021.

\bibitem{matsumoto2017}
R.~Matsumoto.
\newblock Two {G}ilbert--{V}arshamov-type existential bounds for asymmetric
  quantum error-correcting codes.
\newblock {\em Quantum Inf. Process.}, 16(12):285, 2017.

\bibitem{MM}
R.~Matsumoto and S.~Miura.
\newblock On the {F}eng-{R}ao bound for the {$\mathcal{L}$}-construction of
  algebraic geometry codes.
\newblock {\em IEICE Trans. Fundamentals}, E83-A(5):926--930, 2000.

\bibitem{miura1998linear}
S.~Miura.
\newblock Linear codes on affine algebraic curves.
\newblock {\em Trans. IEICE}, J81-A(10):1398--1421, 1998.

\bibitem{ps}
N.~Patanker and S.~K. Singh.
\newblock Affine variety codes over a hyperelliptic curve.
\newblock {\em Probl. Inf. Transm.}, 57:84--97, 2021.

\bibitem{MR1268660}
R.~Pellikaan.
\newblock On the efficient decoding of algebraic-geometric codes.
\newblock In {\em Eurocode '92 ({U}dine, 1992)}, volume 339 of {\em CISM
  Courses and Lect.}, pages 231--253. Springer, Vienna, 1993.

\bibitem{salazar}
G.~Salazar, D.~Dunn, and S.~B. Graham.
\newblock An improvement of the {F}eng-{R}ao bound on minimum distance.
\newblock {\em Finite Fields Appl.}, 12:313--335, 2006.

\bibitem{aq6}
P.~K. Sarvepalli, A.~Klappenecker, and M.~R{\"o}tteler.
\newblock Asymmetric quantum codes: constructions, bounds and performance.
\newblock In {\em Proc. R. Soc. Lond. Ser. A: Math. Phys. Eng. Sci.}, volume
  465, pages 1645--1672. The Royal Society, 2009.

\end{thebibliography}
\end{document}